\newtheorem{lemma}{Lemma}[section]
\newtheorem{proposition}[lemma]{Proposition}
\newtheorem{corollary}[lemma]{Corollary}
\theoremstyle{remark}
\DeclareMathOperator*{\argmin}{argmin}
\DeclareMathOperator{\linspan}{span}
\newcommand{\real}{\mathbb{R}}
\newcommand{\poly}{\mathbb{P}}
\newcommand{\dualp}[1]{\left\langle #1 \right\rangle} 
\newcommand{\param}{\mathcal{P}}
\newcommand{\train}{\mathcal{T}}
\newcommand{\ts}{\Phi}
\newcommand{\err}{\sigma}
\newcommand{\solm}{\mathcal{F}} 
\begin{document}

\title{Transformed snapshot interpolation}
\author{G. Welper\footnote{Department of Mathematics, Texas A\&M University, College Station, Texas 77843-3368, USA, email \href{mailto:welper@math.tamu.edu}{\texttt{welper@math.tamu.edu}}}}
\date{}
\maketitle

\begin{abstract}
  Functions with jumps and kinks typically arising from parameter dependent or stochastic hyperbolic PDEs are notoriously difficult to approximate. If the jump location in physical space is parameter dependent or random, standard approximation techniques like reduced basis methods, PODs, polynomial chaos, etc. are known to yield poor convergence rates. In order to improve these rates, we propose a new approximation scheme. As reduced basis methods, it relies on snapshots for the reconstruction of parameter dependent functions so that it is efficiently applicable in a PDE context. However, we allow a transformation of the physical coordinates before the use of a snapshot in the reconstruction, which allows to realign the moving discontinuities and yields high convergence rates. The transforms are automatically computed by minimizing a training error. In order to show feasibility of this approach it is tested by 1d and 2d numerical experiments.
\end{abstract}

\smallskip
\noindent \textbf{Keywords:} Parametric PDEs, reduced order modelling, shocks, transformations, interpolation, convergence rates, stability

\smallskip
\noindent \textbf{AMS subject classifications:} 41A46, 41A25, 35L67, 65M15

\section{Introduction}

A cornerstone of reduced order modelling, stochastic PDEs and uncertainty quantification, is the efficient approximation of high dimensional PDE solutions $u(x, \mu)$ depending on physical variables $x \in \Omega$ and parametric or random variables $\mu \in \param \subset \real^d$. Many contemporary approximation techniques like e.g. reduced basis methods \cite{RozzaHuynhPatera2008, SenVeroyHuynhEtAl2006, PateraRozza2006}, POD \cite{Sirovich1987, KunischVolkwein2001, KunischVolkwein2002}, Karhunen Lo\`{e}ve expansion \cite{Loeve1978} or polynomial chaos \cite{Wiener1938, XiuKarniadakis2002, Schoutens2000} build upon a reconstruction by a truncated sum 
\begin{equation}
  u(x,\mu) \approx \sum_{i} c_i(\mu) \psi_i(x),
  \label{eq:polyadic-decomposition}
\end{equation}
where the choice and computation of $c_i(\mu)$ and $\psi_i(x)$ depends on the specific method at hand: For reduced basis methods $\psi_i(x) = u(x, \mu_i)$ are snapshots and the $c_i(\mu)$ are computed by a Galerkin projection. For POD and Karhunen Lo\`{e}ve expansions one minimizes the error between $u(x,\mu)$ and any truncated representation of the form \eqref{eq:polyadic-decomposition} and in case of polynomial chaos the functions $c_i(\mu)$ are orthogonal polynomials. Borrowing from the tensor community, we refer to \eqref{eq:polyadic-decomposition} as a \emph{polyadic decomposition} and denote methods based on it by \emph{polyadic decomposition based methods}. The success of all these methods relies on the fact that for many problems one can truncate this sum to a few summands only for the price of a very small error. 

However, this regularity assumption is not always true. An important class of problems are functions $u(x,\mu)$ that have parameter dependent or random jumps or kinks arising e.g. in parametric or stochastic hyperbolic PDEs. Polyadic decomposition based methods are expected to perform poorly for these type of problems. In fact in Appendix \ref{sec:linear-width} we consider a counterexample for which no polyadic decomposition based method can achieve a convergence rate higher than one with respect to the number of summands in a polyadic decomposition. See also \cite{IaccarinoPetterssonNordstrEtAl2010} for a survey in case of uncertainty quantification. 

There are relatively few methods in the literature \cite{ConstantineIaccarino2012, OhlbergerRave2013, JakemanNarayanXiu2013} that directly address this poor performance for parameter dependent jumps and kinks. Instead, much of the work does use polyadic decompositions and focuses on different problems arising in the context of reduced order modelling of parametric hyperbolic PDEs and singularly perturbed problems: Solving the PDE directly in a reduced basis, online/offline decompositions and error estimators, see \cite{HaasdonkOhlberger2008, HaasdonkOhlberger2008a, NguyenRozzaPatera2009, YanoPateraUrban2014, DahmenPleskenWelper2014, Dahmen2015}. 

The goal of this paper is the construction of an alternative approximation method to replace standard polyadic decompositions in order to achieve higher convergence rates for functions $u(x,\mu)$ with parameter dependent jumps and kinks. In addition, the method relies on snapshots and optionally error estimators as input data so that it can be used efficiently and non-intrusively with existing PDE solvers. Somewhat similar to \cite{OhlbergerRave2013}, we allow a transformation $\phi(\mu, \eta): \Omega \to \Omega$ of the physical domain before we use a snapshot $u(x,\eta)$ in the reconstruction of $u(x,\mu)$, i.e.
\begin{equation*}
  u(x,\mu) \approx \sum_{\eta \in \param_n} c_\eta(\mu) u(\phi(\mu, \eta)(x), \eta)
\end{equation*}
where $\eta$ is in some finite set of parameters $\param_n$. The purpose of the additional transform is an alignment of the discontinuities of $u(x,\eta)$ with the ones of $u(x,\mu)$. As a result the discontinuities are ``invisible'' in parameter direction so that very few summands yield accurate approximations. More rigorously, we prove a high order error estimate that does not depend on the regularity of $u(x,\mu)$ itself, but on the regularity of the modified snapshots after alignment which is considerably higher for many practical problems. In addition, because exact alignment is rarely possible in practice, we also take perturbation results into account. Similar to greedy methods for the construction of reduced bases, or neural networks, the transform $\phi$ is computed by minimizing the approximation error on a training sample of snapshots. Although this might seem prohibitively complicated, in Section \ref{sec:optimization} we discuss some preliminary arguments to avoid being trapped in local minima and in Section \ref{sec:numerical-experiments} some 2d numerical experiments are provided where simple subgradient methods provide good results. 

The outlined approximation scheme allows for various realizations with regard to the choices of the coefficients $c_\eta(\mu)$ or the inner transforms $\phi(\mu, \eta)$. Because the main objective of the paper is a proof of principle that one can approximate functions with parameter dependent jumps and kinks with high order from snapshots alone, we usually vote for the simplest possible choices and leave more sophisticated variants for future research.

The paper is organized as follows: In Section \ref{sec:transformed-interpolation} we present the main approximation scheme and a basic error estimate. Then, in Section \ref{sec:stability} we prove stability results and consider approximations of the inner transform. Afterwards we turn to the actual construction of $\phi(\mu,\eta)$. We first discuss some characteristic based approaches and their drawbacks in Section \ref{sec:transform-by-characteristics} and then the optimization of $\phi(\mu,\eta)$ by training errors in Section \ref{sec:optimization}. Finally Section \ref{sec:numerical-experiments} provides some 1d and 2d numerical experiments. For the sake of completeness, in Appendix \ref{sec:linear-width} we consider a counterexample for which no polyadic decomposition based method can achieve high order convergence rates.

\section{Transformed snapshot interpolation}
\label{sec:transformed-interpolation}

In order to motivate the new approximation scheme, we consider the following prototype example throughout this section
\begin{align}
  u(x, \mu) & := \psi \left( \frac{x}{0.4+\mu}-1 \right), &
  \psi(x) & := \left\{ \begin{array}{ll}
    \exp \left( - \frac{1}{1-x^2} \right) & -1 \le x < -\frac{1}{2} \\
    0 & \text{else}
  \end{array} \right.
  \label{eq:mollifier-cut-off}
\end{align}
where $\psi$ is the standard mollifier cut off at $x = -1/2$. This is not necessarily a solution of a PDE, but has the main features we are interested in: a discontinuity that is moving with the parameter. An example for a polyadic decomposition based approximation can be seen in Figure \ref{fig:interpolation} where we recover $u(x,\mu)$ from three snapshots by a polynomial interpolation
\begin{equation*}
  u(x,\mu) \approx \sum_{\eta \in \param_n} \ell_\eta(\mu) u(x, \eta)
\end{equation*}
where $\param_n \subset \param$ are some interpolation points and $\ell_\eta$, $\eta \in \param_n$ are the corresponding Lagrange interpolation polynomials. We see the typical ``staircasing behaviour'' which significantly deteriorates the solutions quality. Although our choice of the polyadic decomposition is perhaps overly simplistic, reduced basis methods and other more sophisticated schemes suffer from the same problem.

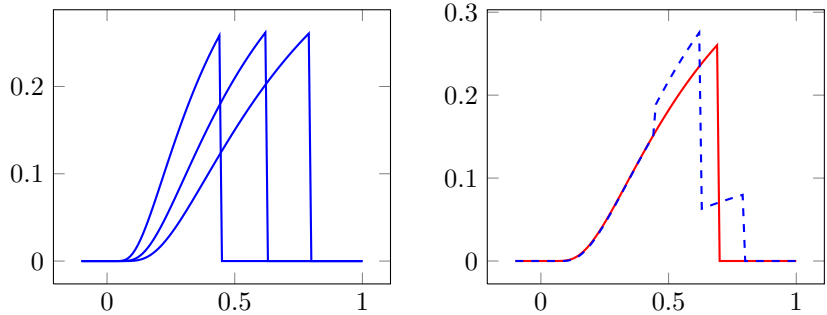
\begin{figure}[htb]

  \hfill
  \begin{tikzpicture}
    \begin{axis}[width=0.5\textwidth]
      \addplot[mark=none, color=blue, thick] table[x=grid points, y=snapshot 0, col sep=comma] {pics/mollifier_cutoff_1d.csv};
      \addplot[mark=none, color=blue, thick] table[x=grid points, y=snapshot 1, col sep=comma] {pics/mollifier_cutoff_1d.csv};
      \addplot[mark=none, color=blue, thick] table[x=grid points, y=snapshot 2, col sep=comma] {pics/mollifier_cutoff_1d.csv};
    \end{axis}
  \end{tikzpicture}
  \hfill
  \begin{tikzpicture}
    \begin{axis}[width=0.5\textwidth]
      \addplot[mark=none, color=red, thick] table[x=grid points, y=truth solution, col sep=comma] {pics/mollifier_cutoff_1d.csv};
      \addplot[mark=none, color=blue, dashed, thick] table[x=grid points, y=interpolation, col sep=comma] {pics/mollifier_cutoff_1d.csv};
    \end{axis}
  \end{tikzpicture}
  \hfill~

  \caption{Left: Snapshots of the parametric function \eqref{eq:mollifier-cut-off} for parameters $\mu = 0.5, 0.85, 1.2$. Right: Exact solution (red) and polynomial interpolation (blue, dashed) for $\mu = 1.0$.}

  \label{fig:interpolation}

\end{figure}

Unlike this superposition of snapshots resulting in the ``staircasing'' phenomena, it seems much more intuitive to compute one snapshot $u(x,\eta)$ and recover the function $u(x,\mu)$ for a different $\mu$ by stretching this snapshot such that the left end of the support is fixed and the jump locations match. To state this intuition in mathematical terms, ``stretching'' one function $u(x,\eta)$ to match a second one $u(x,\mu)$ essentially boils down to a transform $\phi(\mu, \eta): \Omega \to \Omega$ of the physical variables so that we obtain the approximation 
\begin{equation*}
  u(x,\mu) \approx u(\phi(\mu, \eta)(x),\eta).
\end{equation*}
In general, even for optimal choices of $\phi$, we cannot obtain arbitrarily good approximation errors in this way. To obtain convergence, we therefore require in addition that $\eta$ is close to $\mu$ which yields the following simple approximation scheme: First, we choose a finite subset $\param_n \subset \param$ of the parameter domain $\param$ and compute the snapshots $u(x, \eta)$ for $\eta \in \param_n$. Then, given a new $\mu \in \param$, we find the $\eta_\mu \in \param_n$ closest to $\mu$ and approximate
\begin{equation}
  u(x,\mu) \approx u(\phi(\mu, \eta_\mu)(x),\eta_\mu).
  \label{eq:piecewise-constant}
\end{equation}
Besides the snapshots themselves this requires the knowledge of the transforms $(x,\mu) \to \phi(\mu, \eta)(x)$ for finitely many $\eta \in \param_n$. Thus instead of approximating one single function depending on $x$ and $\mu$ we have to find many of them! However, whereas polyadic decompositions perform poorly for $u(x,\mu)$, they often yield good results for the transforms $\phi(\mu, \eta)$: Their smoothness with respect to $\mu$ depends on the smoothness of the jump or kink location with respect to the parameter and not the smoothness of $u(x,\mu)$ itself. For example \eqref{eq:mollifier-cut-off} the jump location is $j(\mu) = \frac{1}{5} + \frac{1}{2} \mu$ so that a linear transform
\begin{equation*}
  \phi(\mu, \eta)(x) = x - j(\mu) + j(\eta)
\end{equation*}
is sufficient to align the jumps. As shown in Figure \ref{fig:transformed-interpolation}, this transform does not align the left end of the supports, but because $\eta$ is close to $\mu$ this is good enough as we see below. For more complicated problems the transform $\phi(\mu, \eta)$ is not explicitly known and we have to find efficient ways to compute it from the given data. We postpone this issue to Section \ref{sec:optimization} and assume for the remainder of this section that $\phi(\mu, \eta)(x)$ is given to us.

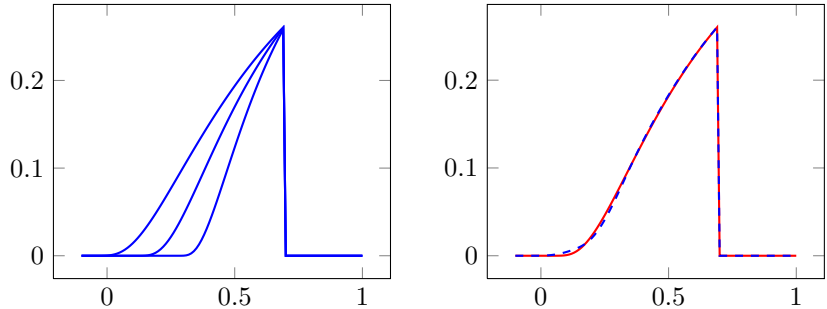
\begin{figure}[htb]

  \hfill
  \begin{tikzpicture}
    \begin{axis}[width=0.5\textwidth]
      \addplot[mark=none, color=blue, thick] table[x=grid points, y=transformed snapshot 0, col sep=comma] {pics/mollifier_cutoff_1d.csv};
      \addplot[mark=none, color=blue, thick] table[x=grid points, y=transformed snapshot 1, col sep=comma] {pics/mollifier_cutoff_1d.csv};
      \addplot[mark=none, color=blue, thick] table[x=grid points, y=transformed snapshot 2, col sep=comma] {pics/mollifier_cutoff_1d.csv};
    \end{axis}
  \end{tikzpicture}
  \hfill
  \begin{tikzpicture}
    \begin{axis}[width=0.5\textwidth]
      \addplot[mark=none, color=red, thick] table[x=grid points, y=truth solution, col sep=comma] {pics/mollifier_cutoff_1d.csv};
      \addplot[mark=none, color=blue, dashed, thick] table[x=grid points, y=transformed interpolation, col sep=comma] {pics/mollifier_cutoff_1d.csv};
    \end{axis}
  \end{tikzpicture}
  \hfill~

  \caption{Left: Transformed snapshots of the parametric function \eqref{eq:mollifier-cut-off} for parameters $\mu = 1.0$ and $\eta = 0.5, 0.85, 1.2$. Right: Exact solution (red) and transformed snapshot interpolation (blue, dashed) for $\mu = 1.0$.}

  \label{fig:transformed-interpolation}

\end{figure}

To assess the approximation error, we observe that our scheme \eqref{eq:piecewise-constant} is a piecewise constant approximation of the \emph{transformed snapshots}
\begin{equation*}
  v_\mu(x,\eta) := u(\phi(\mu, \eta)(x),\eta)
\end{equation*}
with respect to $\eta$ at the point $\eta = \mu$. Thus, we obtain the error estimate
\begin{equation}
  \|v_\mu(x, \mu) - v_\mu(x, \eta_\mu)\|_{L_p} = \mathcal{O}(n^{-1})
  \label{eq:piecewise-const-error}
\end{equation}
where $n$ is the number of snapshots, provided that $v_\mu(x,\eta)$ is differentiable with respect to $\eta$. This is achieved by the inner transform $\phi(\mu, \eta)$: Whereas the original snapshots $u(x,\mu)$ have jumps in parameter direction, the transformed snapshots have jumps in fixed locations independent of $\eta$ resulting in a smooth dependence of $v_\mu(x, \eta)$ on $\eta$. Because $\phi(\mu, \eta)$ is supposed to align the discontinuities and kinks of $u(\cdot, \mu)$ and $u(\cdot, \eta)$, it is natural to require that 
\begin{equation}
  \phi(\mu,\mu)(x) = x
  \label{eq:id}
\end{equation}
which yields $u(x,\mu) = v_\mu(x,\mu)$. With \eqref{eq:piecewise-const-error}, it follows that
\begin{equation*}
  \|u(x, \mu) - v_\mu(x, \eta_\mu)\|_{L_p} = \mathcal{O}(n^{-1})
\end{equation*}
so that our approximation scheme achieves first order convergence.

In comparison, due to lacking smoothness for standard piecewise constant approximations
\begin{equation}
  u(x,\mu) \approx u(x,\eta_\mu)
  \label{eq:piecewise-constant-no-transform}
\end{equation}
we expect convergence rates of $h^{1/p}$ for spacial errors in $L_p$. Thus, depending on the norm, the inner transform yields a gain in the convergence order for $p<1$ or none at all for $p=1$. However, the major impediment is no longer a lack of regularity but the low order convergence of the piecewise constant approximation of $\eta \to v_\mu(x,\eta)$. Therefore, we replace it by a higher order scheme. For simplicity, we confine ourselves to a simple polynomial interpolation and leave more sophisticated choices for future research. Thus for interpolation points $\param_n \subset \param$ and corresponding Lagrange basis polynomials $\ell_\eta$ we define the \emph{transformed snapshot interpolation} by
\begin{equation}
  u(x,\mu) \approx u_n(x,\mu) := \sum_{\eta \in \param_n} \ell_\eta(\mu) u(\phi(\mu, \eta)(x),\eta).
  \label{eq:interpol-transform}
\end{equation}
The input data for this reconstruction is identical to the previous piecewise constant case: We need $|\param_n|$ snapshots and $|\param_n|$ transforms $(x,\mu) \to \phi(\mu,\eta)(x)$, $\eta \in \param_n$. Only the reconstruction formula has been changed to a higher order interpolation. In order to state an error estimate, let $\poly^n$ be the span of the Lagrange basis polynomials and recall that the Lebesgue constant is the norm of the polynomial interpolation operator in the $\sup$-norm which is given by
\begin{equation}
  \Lambda_n := \sup_{\mu \in \param} \sum_{\eta \in \param_n}^n |\ell_\eta(\mu)|.
  \label{eq:lebesgue-constant}
\end{equation}
We obtain the following error estimate.

\begin{proposition}
  \label{prop:outer-error}
  Assume $u_n(x,\mu)$ is defined by the transformed snapshot interpolation \eqref{eq:interpol-transform}. Then for all $\mu \in \param$ the error is bounded by
  \begin{equation*}
    \|u(\cdot, \mu) - u_n(\cdot, \mu)\|_{L_1} \le \Lambda_n \Big\| \inf_{p \in \poly^n} |v(\cdot, \mu) - p| \Big\|_{L_1}. 
  \end{equation*}
\end{proposition}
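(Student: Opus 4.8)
The plan is to recognize the right-hand side of \eqref{eq:interpol-transform} as an ordinary Lagrange interpolation operator acting on the transformed snapshots and then to invoke the classical Lebesgue-type error estimate pointwise in $x$. To this end, fix $\mu \in \param$ and regard $v_\mu(x,\eta) = u(\phi(\mu,\eta)(x),\eta)$ as a function of the parameter $\eta$ for each fixed $x$. Writing $I_n$ for the interpolation operator $g \mapsto \sum_{\eta \in \param_n} \ell_\eta(\cdot)\, g(\eta)$ onto $\poly^n$, the definition \eqref{eq:interpol-transform} reads $u_n(x,\mu) = (I_n v_\mu(x,\cdot))(\mu)$, while the identity condition \eqref{eq:id} gives $u(x,\mu) = v_\mu(x,\mu)$. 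Hence, for every $x$, the error $u(x,\mu) - u_n(x,\mu)$ is exactly the interpolation error of $\eta \mapsto v_\mu(x,\eta)$ evaluated at $\eta = \mu$, and the whole statement reduces to the standard estimate relating interpolation error and best approximation, integrated in $x$.

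First I would exploit that $I_n$ reproduces polynomials: for any $p \in \poly^n$ one has $(I_n p)(\mu) = p(\mu)$, and in particular $\sum_{\eta \in \param_n} \ell_\eta(\mu) = 1$. Subtracting such a $p$ (which may be chosen depending on $x$) and using linearity of $I_n$, I would write
\begin{equation*}
  u(x,\mu) - u_n(x,\mu) = \bigl(v_\mu(x,\mu) - p(\mu)\bigr) - \bigl(I_n(v_\mu(x,\cdot) - p)\bigr)(\mu).
\end{equation*}
The second term expands as $\sum_{\eta \in \param_n} \ell_\eta(\mu)\,(v_\mu(x,\eta) - p(\eta))$, so it is controlled by $\sum_{\eta \in \param_n} |\ell_\eta(\mu)|$ times the largest nodal deviation, which by \eqref{eq:lebesgue-constant} is at most $\Lambda_n \sup_\eta |v_\mu(x,\eta) - p(\eta)|$. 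Combined with the trivial bound on the first term, this yields a pointwise-in-$x$ estimate of the form $|u(x,\mu) - u_n(x,\mu)| \le (1+\Lambda_n)\,\inf_{p \in \poly^n} |v(x,\mu) - p|$, where I read $\inf_{p} |v(x,\mu) - p|$ as the best approximation of the parameter map $\eta \mapsto v_\mu(x,\eta)$ by elements of $\poly^n$.

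Finally I would integrate this pointwise bound over $x \in \Omega$. Since the inequality holds for each $x$ with its own optimal $p$, the infimum stays inside the integral, and monotonicity of the integral delivers the claimed $L_1$ bound, with the polynomial-reproduction term absorbed into the Lebesgue constant. I expect the only real difficulties to be bookkeeping rather than deep obstacles: keeping the $x$-dependence of the optimal polynomial $p$ explicit so that the infimum legitimately passes through the integral, and ensuring that the best-approximation error on the right includes the evaluation point $\mu$ so that the first term above is genuinely dominated by it. Tightening the constant from $1+\Lambda_n$ to exactly $\Lambda_n$, if this is the intended reading, would require choosing $p$ to match $v_\mu(x,\mu)$ at $\mu$, after which the first term vanishes and only the $\Lambda_n$-weighted nodal term survives.
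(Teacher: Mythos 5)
Your argument is exactly the one the paper intends: its proof consists of the single observation that $u(x,\mu)=v_\mu(x,\mu)$ together with ``standard interpolation estimates applied to $\eta\to v_\mu(x,\eta)$'', and your write-up simply makes that standard Lebesgue-constant estimate explicit pointwise in $x$ before integrating. The one discrepancy --- that the textbook argument yields $1+\Lambda_n$ rather than $\Lambda_n$ --- is something you already flag and the paper does not address either, so it is a blemish in the stated constant rather than a gap in your proof.
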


\begin{proof}
  The proof follows directly from $u(x, \mu) = v_\mu(x,\mu)$ and standard interpolation estimates applied to $\eta \to v_\mu(x,\eta)$.
\end{proof}

For this proposition as well as the remainder of this paper we choose the $L_1$-norm to measure errors because it is the most common choice for hyperbolic PDEs. Also note that the given result is just one option of the various estimates for polynomial interpolation. For example, if we assume analytic dependence of $v_\mu(x,\eta)$ on $\eta$ and use Chebyshev nodes for a one dimensional parameter, we can achieve exponential convergence rates. The most important observation, however, is that the estimate does not involve any regularity assumption of $u(x,\mu)$ itself. Instead it relies on the regularity of $v_\mu(x,\eta)$ with respect to $\eta$ which can be considerably better.

The results for example \eqref{eq:mollifier-cut-off} are shown in the right picture in Figure \ref{fig:transformed-interpolation}. We see a very accurate approximation of the jump, however the approximation quality around the left end of the support of $u(x,\mu)$ is slightly worse than for the original interpolation in Figure \ref{fig:interpolation}. The reason is that the left end of the support is parameter dependent after the transform so that $v_\mu(x,\eta)$ is no longer analytic in $\eta$, however infinitely differentiable. Therefore the loss we suffer at this point is of orders of magnitude less than the staircasing behaviour around the jump of the simple interpolation in Figure \ref{fig:interpolation}.

In summary, instead of approximating the non-smooth function $u(x,\mu)$ directly, for every target $\mu \in \param$ we construct a new smooth function $(x,\eta) \to v_\mu(x, \eta)$ and approximate this function instead. The interpolation condition \eqref{eq:id} guarantees that $u(x,\mu) = v_\mu(x,\eta)$ so that this yields accurate approximations of $u(x,\mu)$ itself as depicted in Figure \ref{fig:solution-manifold}. In addition, for our preliminary simple linear interpolation of $v_\mu(x,\eta)$ this allows an offline/online decomposition: in an offline phase, we compute the snapshots $u(x,\eta)$ as well as the transforms $\phi(\mu, \eta)(x)$ at the interpolation nodes $\eta$ (see Section \ref{sec:optimization} below). Then in an online phase we can efficiently approximate $u(x,\mu)$ for any $\mu \in \param$ by the transformed snapshot interpolation \eqref{eq:interpol-transform}.

\begin{figure}[htb]

  \begin{center}
    \begin{tikzpicture}[scale=0.7]

      \draw[->] (0,0) -- (5,0);
      \draw[->] (0,0) -- (0,5);
      \draw[->] (0,0) -- (-2.5,-2.5);

      \draw[thick, dashed, color=blue] (-2,-1) .. controls (-1,2) and (3,1) .. node[pos=0.5] (A) {} node[below, color=black] {$\mu$} (4,4) node[left] {$v_\mu(\cdot,\eta)$};

      \node[shape=coordinate] (B) at ([shift={(150:1.5)}]A) {x};
      \node[shape=coordinate] (C) at ([shift={(-30:2)}]A) {};

      \draw[thick, color=red] (-4,2) -- (B) -- (C) node[right] {$u(\cdot, \eta)$} -- (3,-1);

    \end{tikzpicture}
  \end{center}

  \caption{The three dimensional coordinate axes indicate the linear space of $x$-dependent functions so that each point of the red line represents a snapshot $x \to u(x,\eta)$ for some parameter $\eta$. The kinks indicate that in our case this solution manifold is not smooth with respect to $\eta$, however note that in reality it is non-smooth for every parameter. The blue dashed line indicates the more smooth transformed snapshots $v_\mu(x, \eta)$.}

  \label{fig:solution-manifold}

\end{figure}
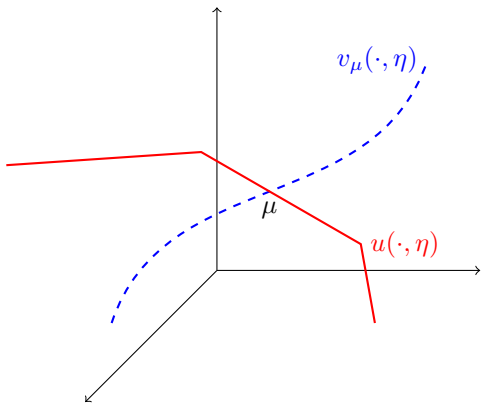

\section{Stability}
\label{sec:stability}

Of course high order smoothness of $v_\mu(x, \eta)$ with respect to $\eta$ needed for high approximation orders in Proposition \ref{prop:outer-error} requires that jumps and kinks are exactly aligned. However, for any finite approximation of the inner transform $\phi$, this is rarely possible. Therefore, we next consider two perturbation results, that allow us to bound the error while taking approximation errors of the inner transform into account. The first one, Lemma \ref{lemma:perturbation-measure}, relies on a measure theoretic argument and allows rather general transforms including ones with kinks as found in e.g. finite element discretizations. The second one, Corollary \ref{cor:perturbation-diffeomorphism}, avoids measure theory, but requires the inner transforms $\phi(\mu, \eta)$ to be diffeomorphisms. 

In the following, let $\varphi(\mu, \eta)(x)$ be a perturbation of $\phi(\mu, \eta)(x)$. To simplify the arguments below, for the time being, we forget about the parameter dependence and consider two transforms $\varphi, \phi: \Omega \to \Omega$ instead. If we assume that each point $\phi(x)$ can be connected to the point $\varphi(x)$ along a curve $\Phi^s(x)$ for $s$ in the interval $[0,1]$, we can rewrite the perturbation by the fundamental theorem of line integrals
\begin{equation*}
  (u \circ \phi)(x) - (u \circ \varphi)(x) = \int_0^1 u'(\Phi^s(x)) \partial_s \Phi^s(x) \, \text{d} t,
\end{equation*}
so that it remains to estimate the right hand side. The map $(x,s) \to \Phi^s(x)$ can be regarded as a function from $\Omega \times [0,1] \to \Omega$ such that 
\begin{align}
  \Phi^0(x) & = \phi(x) & \Phi^1(x) & = \varphi(x).
  \label{eq:homotopy}
\end{align}
which is a homotopy between $\phi$ and $\varphi$ if it is continuous in addition. Furthermore, let $\lambda$ be the Lebesgue measure and $\mathcal{A}$ the Lebesgue $\sigma$-algebra on $\Omega$ and let $\Phi^s_* \lambda$ denote the pushforward measure defined by
\begin{equation*}
  \begin{aligned}
    \Phi^s_* \lambda(A) & = \lambda( (\Phi^s)^{-1}(A)) & & \text{for all } A \in \mathcal{A}. 
  \end{aligned}
\end{equation*}
Then we have the following lemma.

\begin{lemma}
  \label{lemma:perturbation-measure}
  Assume that $u \in BV(\Omega)$ and $\Phi^s$, $0 \le s \le 1$ given by \eqref{eq:homotopy} is measurable and differentiable with respect to $s$ such that
  \begin{equation}
    \begin{aligned}
      \Phi^s_* \lambda(A) & \le c \lambda(A) & & \text{for all } A \in \mathcal{A} \text{ and } 0 \le s \le 1
    \end{aligned}
    \label{eq:pushforward-bound}
  \end{equation}
  and
  \begin{equation}
    \sup_{\substack{0 \le s \le 1 \\ x \in \Omega}} |\partial_s \Phi^s(x)| \le C \|\phi - \varphi\|_{L_\infty(\Omega)}
    \label{eq:length}
  \end{equation}
  for constants $c, C \ge 0$. Then we have
  \begin{equation}
    \|u \circ \phi - u \circ \varphi \|_{L_1(\Omega)} \le c C \|u\|_{BV(\Omega)} \|\phi - \varphi\|_{L_\infty(\Omega)}.
    \label{eq:perturbation-measure}
  \end{equation}
\end{lemma}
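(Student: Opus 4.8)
The plan is to prove the estimate first for smooth $u$, where the line-integral identity quoted above is genuinely meaningful, and then recover the $BV$ case by smooth approximation. For $u \in C^1(\Omega)$ the chain rule gives $\partial_s\, u(\Phi^s(x)) = \nabla u(\Phi^s(x)) \cdot \partial_s \Phi^s(x)$, and the fundamental theorem of calculus in $s$ together with $\Phi^0 = \phi$, $\Phi^1 = \varphi$ from \eqref{eq:homotopy} yields
\[
  (u\circ\phi)(x) - (u\circ\varphi)(x) = \int_0^1 \nabla u(\Phi^s(x)) \cdot \partial_s \Phi^s(x)\,\mathrm{d}s .
\]
Integrating over $\Omega$, pulling the absolute value inside, and estimating $|\nabla u(\Phi^s(x)) \cdot \partial_s\Phi^s(x)| \le |\nabla u(\Phi^s(x))|\,|\partial_s\Phi^s(x)|$ by Cauchy--Schwarz, I insert the length bound \eqref{eq:length}. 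This leaves $C\|\phi-\varphi\|_{L_\infty}$ times $\int_\Omega\int_0^1 |\nabla u(\Phi^s(x))|\,\mathrm{d}s\,\mathrm{d}x$, and Tonelli lets me swap the two integrals, the integrand being nonnegative and, by the measurability hypothesis on $(x,s)\mapsto\Phi^s(x)$, jointly measurable.

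For each fixed $s$ the inner integral is a composition with $\Phi^s$, so the pushforward change-of-variables formula $\int_\Omega g\circ\Phi^s\,\mathrm{d}\lambda = \int_\Omega g\,\mathrm{d}(\Phi^s_*\lambda)$ applied to $g = |\nabla u|$ turns it into $\int_\Omega |\nabla u|\,\mathrm{d}(\Phi^s_*\lambda)$. The pushforward bound \eqref{eq:pushforward-bound} then gives $\int_\Omega |\nabla u|\,\mathrm{d}(\Phi^s_*\lambda) \le c\int_\Omega |\nabla u|\,\mathrm{d}\lambda$, uniformly in $s$; for a nonnegative integrand this is immediate from the layer-cake formula and $\Phi^s_*\lambda(A)\le c\,\lambda(A)$. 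Since $\int_\Omega |\nabla u|\,\mathrm{d}\lambda$ is the $BV$ seminorm of the smooth function $u$ and is bounded by $\|u\|_{BV(\Omega)}$, collecting the constants proves \eqref{eq:perturbation-measure} in the smooth case.

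To pass from $C^1$ to $BV$, I invoke the standard smooth approximation theorem: choose $u_k \in C^\infty(\Omega)\cap BV(\Omega)$ with $u_k\to u$ in $L_1(\Omega)$ and $\int_\Omega |\nabla u_k|\,\mathrm{d}\lambda \to |u|_{BV} \le \|u\|_{BV(\Omega)}$. The smooth estimate applies to each $u_k$. The point where I must be careful is that taking the limit requires $u_k\circ\phi\to u\circ\phi$ and $u_k\circ\varphi\to u\circ\varphi$ in $L_1(\Omega)$, and this is exactly where the pushforward bound at the endpoints $s=0$ and $s=1$ enters, via
\[
  \|u_k\circ\phi - u\circ\phi\|_{L_1} = \int_\Omega |u_k-u|\,\mathrm{d}(\phi_*\lambda) \le c\,\|u_k-u\|_{L_1}\to 0 ,
\]
and likewise for $\varphi=\Phi^1$. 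Hence $\|u_k\circ\phi - u_k\circ\varphi\|_{L_1}\to\|u\circ\phi-u\circ\varphi\|_{L_1}$, and passing to the limit in the smooth bound yields the claim.

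The only genuine obstacle is the low regularity of $u$: the line-integral identity and the chain rule are classical statements with no direct meaning for a $BV$ function, whose derivative is merely a measure, so the smooth reduction is unavoidable. What makes that reduction go through is precisely the hypothesis that \eqref{eq:pushforward-bound} holds for \emph{all} $s\in[0,1]$, in particular at the endpoints, since this is what transports the $L_1$-convergence $u_k\to u$ through composition with the (possibly irregular) transforms. A minor technical caveat is that the approximating $u_k$ must be defined on the range of every $\Phi^s$; on a bounded $\Omega$ one first extends $u$ to a $BV$ function on $\real^n$ without increasing its norm by more than a fixed factor, which leaves the structure of the estimate intact.
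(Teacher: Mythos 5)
Your proof is correct and takes essentially the same route as the paper's: the $C^1$ case via the fundamental theorem of calculus along the homotopy, the length bound \eqref{eq:length} combined with the pushforward bound \eqref{eq:pushforward-bound} after a change of variables, and a density argument in $BV$ that reuses the pushforward bound at the endpoints $s=0,1$ to transport the $L_1$-convergence of the smooth approximants through the compositions. Your additional remarks (explicit Tonelli, the layer-cake justification, the extension of $u$ beyond $\Omega$) only make the argument slightly more careful than the paper's version.
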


Let us discuss the main assumptions before we prove the proposition. If the speed $|\partial_s \Phi^s(x)|$ of each curve $s \to \Phi^s(x)$ is quasi uniform, i.e. equivalent to a constant $S$ for all $x$ and $s$, we have
\begin{equation*}
  \|\partial_s \Phi^s\|_{L_\infty(\Omega \times [0,1])} \sim 
 \int_0^1 | \partial_s \Phi^s(x) | \, \text{d} s  =: l(x)
\end{equation*}
where $l(x)$ is the length of the curve connecting $\phi(x)$ to $\varphi(x)$. In that case assumption \eqref{eq:length} states that, up to a constant, the length of each curve $\Phi^s(x)$ is bounded by the distance $|\phi(x) - \varphi(x)|$ of its endpoints.

In case the domain $\Omega$ is convex, a simple choice of the curves $\Phi^s(x)$ are the convex combinations of the end points:
\begin{equation*}
  \Phi^s(x) = (1-s) \phi(x) + s \varphi(x).
\end{equation*}
In that case, we have $\partial_s \Phi^s(x) = \varphi(x) - \phi(x)$ so that condition \eqref{eq:length} is satisfied.

In order to justify the second assumption \eqref{eq:pushforward-bound}, let us consider the following scenario: Assume that $\phi(x) = x_0 \in \Omega$ and $\varphi(x) = x_1 \in \Omega$ map all of $\Omega$ to single points. Furthermore let $u$ be a piecewise constant function with a jump so that $x_0$ and $x_1$ are on different sides of this jump. On the one hand we obtain $\|u \circ \phi - u \circ \varphi \|_{L_1(\Omega)} = \|u(x_0) - u(x_1)\|_{L_1(\Omega)} = h \lambda(\Omega)$ where $h$ is the hight of the jump. On the other hand we have $\|\phi - \varphi\|_{L_\infty(\Omega)} = |x_0 - x_1|$ which can be made arbitrary small by suitable choices of $x_0$ and $x_1$ on each side of the jump. Thus the main statement \eqref{eq:perturbation-measure} of the proposition is violated. This counterexample relies on the fact that both transforms concentrate all weight in a single point such that $\Phi^i_* \lambda(\{x_i\}) = \lambda(\Omega)$, $i=0,1$ which is ruled out by assumption \eqref{eq:pushforward-bound}.

Finally, we assume that the outer function $u \in BV(\Omega)$ is of bounded variation. This allows jumps and kinks and is one of the most common norms for stability results of hyperbolic PDEs. 

\begin{proof}[Proof of Lemma \ref{lemma:perturbation-measure}]
For the time being, let us assume that $u \in C^1(\Omega)$. Applying the fundamental theorem for line integrals, we obtain
\begin{equation*}
  (u \circ \phi)(x) - (u \circ \varphi)(x) = \int_0^1 u'(\Phi^s(x)) \partial_s \Phi^s(x) \, \text{d} t
\end{equation*}
Thus, we have
\begin{align*}
  \| u \circ \phi - u \circ \varphi \|_{L_1(\Omega)} & = \int_\Omega \left| \int_0^1 u'(\Phi^s(x)) \partial_s \Phi^s(x) \, \text{d} s \right| \text{d} x \\
  & \le \int_\Omega \int_0^1 |u'(\Phi^s(x))| |\partial_s \Phi^s(x)| \, \text{d} s \, \text{d} x \\
  & \le \sup_{\substack{0 \le s \le 1 \\ x \in \Omega}} |\partial_s \Phi^s(x)| \int_\Omega \int_0^1 |u'(\Phi^s(x))| \, \text{d} s \, \text{d} x \\
  & \le C \|\phi - \varphi\|_{L_\infty(\Omega)} \int_0^1 \int_\Omega |u'(\Phi^s(x))| \, \text{d} x \, \text{d} s
\end{align*}
Using the pushforward measure $\Phi^s_* \lambda^{n+1}$ and its bound \eqref{eq:pushforward-bound} we conclude that
\begin{equation}
  \int_\Omega |u'(\Phi^s(x))| \, \text{d} x = \int_\Omega |u'(y)| \, \text{d} \Phi^s_* \lambda(x) \le c \int_\Omega |u'(y)| \, \text{d} x 
  \label{eq:perturbation-measure-proof-1}
\end{equation}
Combining the last two estimates and using that $\int_0^1 \, \text{d} s = 1$ yields
\begin{equation*}
  \| u \circ \phi - u \circ \varphi \|_{L_1(\Omega)} \le c C \|\phi - \varphi\|_{L_\infty(\Omega)} \int_\Omega | u'(y)| \, \text{d} y,
\end{equation*}
which is equivalent to the estimate \eqref{eq:perturbation-measure} we wish to prove.

Finally, we extend the estimate to all $u \in BV(\Omega)$ by using a density argument. To this end note that for all $\epsilon > 0$ there is a $u_\epsilon \in C^1(\Omega)$ such that
\begin{align*}
  \|u - u_\epsilon\|_{L_1(\Omega)} & \le \epsilon & \|u_\epsilon'\|_{L_1(\Omega)} & \le \|u\|_{BV(\Omega)} + \epsilon
\end{align*}
Thus, to apply a density argument, is suffices to bound $\|u \circ \phi - u_\epsilon \circ \phi\|_{L_1(\Omega)}$ and $\|u \circ \varphi - u_\epsilon \circ \varphi\|_{L_1(\Omega)}$. Analogously to \eqref{eq:perturbation-measure-proof-1} we obtain
\begin{align*}
  \|u \circ \phi - u_\epsilon \circ \phi\|_{L_1(\Omega)} & = \int_\Omega |u(\Phi^0(x)) - u_\epsilon(\Phi^0(x))| \, \text{d} x \\
  & = \int_\Omega |u(y) - u_\epsilon(y)| \, \text{d} \Phi^0_* \lambda(y) \\
  & \le c \int_\Omega |u(y) - u_\epsilon(y)| \, \text{d} y \\
  & \le c \|u - u_\epsilon\|_{L_1(\Omega)}
\end{align*}
The bound for $\|u \circ \varphi - u_\epsilon \circ \varphi\|_{L_1(\Omega)}$ follows analogously which completes the proof.
\end{proof}

If the transforms $\Phi^s(x)$ can be chosen to be diffeomorphisms, the pushforward measure is explicitly given by the usual transformation rule
\begin{equation}
  \Phi^s_* \lambda(A) = \int_A |\det D_x (\Phi^s)^{-1}(x)| \, \text{d} \lambda(x)
  \label{eq:pushforward-explicit}
\end{equation}
so that we obtain the following corollary.

\begin{corollary}
  \label{cor:perturbation-diffeomorphism}
  Assume that $u \in BV(\Omega)$ and that $\Phi^s$, $0 \le s \le 1$ given by \eqref{eq:homotopy} are diffeomorphisms for fixed $s$ and differentiable with respect to $s$ such that
  \begin{equation*}
    \begin{aligned}
      |\det D_x (\Phi^s)^{-1}(x)| & \le c & & \text{for all } A \in \mathcal{A} \text{ and } 0 \le s \le 1
    \end{aligned}
  \end{equation*}
  and
  \begin{equation*}
    \sup_{\substack{0 \le s \le 1 \\ x \in \Omega}} |\partial_s \Phi^s(x)| \le C \|\phi - \varphi\|_{L_\infty(\Omega)}
  \end{equation*}
  for constants $c, C \ge 0$. Then we have
  \begin{equation*}
    \|u \circ \phi - u \circ \varphi \|_{L_1(\Omega)} \le c C \|u\|_{BV(\Omega)} \|\phi - \varphi\|_{L_\infty(\Omega)}.
  \end{equation*}
\end{corollary}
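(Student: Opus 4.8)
The plan is to derive the corollary directly from Lemma \ref{lemma:perturbation-measure} by checking that its two hypotheses are met under the assumptions stated here. The speed bound in the corollary is word-for-word the assumption \eqref{eq:length} of the lemma, so that condition transfers for free and requires no work. The only genuine task is to bridge the gap between the pointwise Jacobian bound $|\det D_x (\Phi^s)^{-1}(x)| \le c$ assumed here and the pushforward measure bound \eqref{eq:pushforward-bound} demanded by the lemma.

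To this end I would first observe that each $\Phi^s$, being a diffeomorphism, is in particular continuous and hence measurable, so the measurability hypothesis of the lemma holds automatically. Next I would invoke the change-of-variables formula \eqref{eq:pushforward-explicit}, which expresses the pushforward measure explicitly through the Jacobian of the inverse, and estimate the integrand by the assumed uniform bound:
\begin{equation*}
  \Phi^s_* \lambda(A) = \int_A |\det D_x (\Phi^s)^{-1}(x)| \, \text{d}\lambda(x) \le \int_A c \, \text{d}\lambda(x) = c\, \lambda(A)
\end{equation*}
for every $A \in \mathcal{A}$ and every $0 \le s \le 1$. This is exactly \eqref{eq:pushforward-bound}, so both hypotheses of Lemma \ref{lemma:perturbation-measure} are now in force and the conclusion \eqref{eq:perturbation-measure} yields the claimed estimate.

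I do not expect a real obstacle beyond this short bookkeeping: the entire content of the corollary is the passage from the abstract measure condition to the concrete Jacobian condition, and the diffeomorphism assumption is precisely what makes the transformation rule \eqref{eq:pushforward-explicit} available. The only point deserving a word of care is the validity of that rule, which is standard for diffeomorphisms; this is exactly the trade-off advertised in the surrounding text, namely that one avoids measure theory at the price of requiring the inner transforms to be diffeomorphisms.
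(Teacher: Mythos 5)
Your proposal is correct and follows essentially the same route as the paper: the paper's proof likewise verifies the pushforward bound \eqref{eq:pushforward-bound} by inserting the Jacobian bound into the explicit formula \eqref{eq:pushforward-explicit} and then invokes Lemma \ref{lemma:perturbation-measure}. Your additional remarks on measurability and on the validity of the change-of-variables formula are harmless elaborations of the same argument.
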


\begin{proof}
  We just have to show the bounds \eqref{eq:pushforward-bound} of the pushforward. By its explicit formula \eqref{eq:pushforward-explicit} we have
  \begin{equation*}
    \Phi^s_* \lambda(A)  = \int_A |\det D_x (\Phi^s)^{-1}(x)| \, \text{d} \lambda(x) \le c \lambda(A)
  \end{equation*}
  for all $A \in \mathcal{A}$ and $0 \le s \le 1$ so that the corollary follows from Lemma \ref{lemma:perturbation-measure}
  
\end{proof}

Let us now consider the transformed snapshot interpolation \eqref{eq:interpol-transform} again. Assume that there is a transform $\phi(\mu, \eta)(x)$ that aligns the jumps and kinks exactly so that we obtain high convergence rates in Proposition \ref{prop:outer-error}. In general, we have to find a finite approximation to this exact transform, say $\phi_m(\mu, \eta)(x)$. Note that according to \eqref{eq:interpol-transform} we only need to know this function for the $|\param_n|$ nodes $\eta \in \param_n$, so that we have to approximate $|\param_n|$ functions depending on $x \in \Omega$ and a parameter $\mu \in \param$. Of course this is exactly the same problem as approximating a function $u(x,\mu)$ which is our initial problem, however, the regularity of $\phi(\mu,\eta)(x)$ can be much more favorable as we have seen in the introduction in Section \ref{sec:transformed-interpolation} or as we will see in Section \ref{sec:transform-by-characteristics} below. Therefore, we can apply a more classical polyadic decomposition based approach to find an approximation $\phi_m(\mu, \eta)(x)$ of the inner transform. Replacing the exact transform by the approximate one in the transformed snapshot interpolation  yields
\begin{equation}
  u(x,\mu) \approx u_{n,m}(x,\mu) := \sum_{\eta \in \param_n} \ell_\eta(\mu) u(\phi_m(\mu, \eta)(x),\eta).
  \label{eq:interpol-inner-outer}
\end{equation}
Combining the error estimate of Proposition \ref{prop:outer-error} with the perturbation result Lemma \ref{lemma:perturbation-measure} we arrive at the following Proposition. 

\begin{proposition}
  \label{prop:inner-outer-error}
  Assume that $u \in BV(\Omega)$ and that there are curves $\Phi^s(\mu, \eta)(x)$, $0 \le s \le 1$ for $x \in \Omega$, $\mu \in \param$, $\eta \in \param_n$ measurable and differentiable with respect to $s$ such that
  \begin{align*}
    \Phi(\mu,\eta)^0(x) & = \phi(\mu, \eta)(x) & \Phi(\mu,\eta)^1(x) & = \phi_m(\mu, \eta)(x).
  \end{align*}
  and
  \begin{equation*}
    \begin{aligned}
      \Phi(\mu,\eta)^s_* \lambda(A) & \le c \lambda(A) & & \text{for all } A \in \mathcal{A} \text{ and } 0 \le s \le 1
    \end{aligned}
  \end{equation*}
  and
  \begin{equation*}
    \sup_{\substack{0 \le s \le 1 \\ x \in \Omega}} |\partial_s \Phi(\mu, \eta)^s(x)| \le C \|\phi(\mu, \eta) - \phi_m(\mu, \eta)\|_{L_\infty(\Omega)}
  \end{equation*}
  for constants $c, C \ge 0$. Furthermore let $u_{n,m}(x,\mu)$ be defined by the transformed snapshot interpolation \eqref{eq:interpol-inner-outer}. Then for all $\mu \in \param$ we have the error estimate
  \begin{align*}
    \|u(\cdot, \mu) - u_{n,m}(\cdot, \mu)\|_{L_1} & \le \Lambda_n \Big\| \inf_{p \in \poly^n} |v(\cdot, \mu) - p| \Big\|_{L_1} \\ 
    & \quad + c C \Lambda_n \max_{\eta \in \param_n} \|u(\cdot, \eta)\|_{BV(\Omega)} \max_{\eta \in \param_n} \| \phi(\mu, \eta) - \phi_m(\mu, \eta) \|_{L_\infty(\Omega)},
  \end{align*}
  where $\Lambda_n$ is the Lebesgue constant \eqref{eq:lebesgue-constant}.
\end{proposition}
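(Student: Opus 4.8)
The plan is to split the total error by the triangle inequality into the interpolation error already controlled by Proposition \ref{prop:outer-error} and a perturbation error caused by replacing the exact inner transform $\phi$ by the approximate one $\phi_m$. Introducing the intermediate interpolant $u_n(x,\mu) = \sum_{\eta \in \param_n} \ell_\eta(\mu) u(\phi(\mu, \eta)(x),\eta)$ built from the \emph{exact} transform, I would write
\[
  \|u(\cdot, \mu) - u_{n,m}(\cdot, \mu)\|_{L_1} \le \|u(\cdot, \mu) - u_n(\cdot, \mu)\|_{L_1} + \|u_n(\cdot, \mu) - u_{n,m}(\cdot, \mu)\|_{L_1}.
\]
The first term is bounded immediately by $\Lambda_n \big\| \inf_{p \in \poly^n} |v(\cdot, \mu) - p| \big\|_{L_1}$ via Proposition \ref{prop:outer-error}, which supplies the first summand of the claimed estimate, so the work reduces to the perturbation term.

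For the perturbation term I would exploit that both interpolants share the same Lagrange weights $\ell_\eta(\mu)$, so their difference is $\sum_{\eta \in \param_n} \ell_\eta(\mu)\big[u(\phi(\mu, \eta)(\cdot),\eta) - u(\phi_m(\mu, \eta)(\cdot),\eta)\big]$. Taking the $L_1(\Omega)$ norm and applying the triangle inequality moves the sum outside and produces a factor $|\ell_\eta(\mu)|$ multiplying each snapshot difference $\|u(\phi(\mu, \eta)(\cdot),\eta) - u(\phi_m(\mu, \eta)(\cdot),\eta)\|_{L_1}$. Each such difference is exactly the quantity estimated by Lemma \ref{lemma:perturbation-measure}, applied for fixed $\eta$ with outer function $u(\cdot,\eta)\in BV(\Omega)$, the pair of transforms $\phi(\mu,\eta)$ and $\phi_m(\mu,\eta)$, and the homotopy $\Phi(\mu,\eta)^s$; the pushforward bound and the speed bound demanded by the lemma are precisely the two hypotheses postulated in the proposition. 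This bounds each snapshot difference by $c C \|u(\cdot,\eta)\|_{BV(\Omega)} \|\phi(\mu,\eta) - \phi_m(\mu,\eta)\|_{L_\infty(\Omega)}$.

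To finish I would replace the $\eta$-dependent factors $\|u(\cdot,\eta)\|_{BV(\Omega)}$ and $\|\phi(\mu,\eta) - \phi_m(\mu,\eta)\|_{L_\infty(\Omega)}$ by their maxima over $\eta \in \param_n$, pull them out of the sum, and use that the remaining $\sum_{\eta \in \param_n} |\ell_\eta(\mu)|$ is at most the Lebesgue constant $\Lambda_n$ by its definition \eqref{eq:lebesgue-constant}. Adding the interpolation term from the first step yields the two-term bound exactly as stated. I do not expect a genuine obstacle here, since the argument is essentially bookkeeping layered on the two earlier results; the only point deserving care is that Lemma \ref{lemma:perturbation-measure} be applicable with one common pair of constants $c, C$ for every $\eta \in \param_n$, which is exactly why the proposition states its pushforward and speed assumptions simultaneously for all $\mu$, $\eta$ and $0 \le s \le 1$.
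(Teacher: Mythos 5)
Your proposal is correct and follows essentially the same route as the paper: the same triangle-inequality split into the exact-transform interpolant $u_n$, the same application of Proposition \ref{prop:outer-error} to the first term, and the same term-by-term application of Lemma \ref{lemma:perturbation-measure} with the Lebesgue constant absorbing the sum of $|\ell_\eta(\mu)|$ for the second.
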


\begin{proof}
We have
\begin{equation*}
  \|u(\cdot, \mu) - u_{n,m}(\cdot, \mu)\|_{L_1(\Omega)} \le
  \|u(\cdot, \mu) - u_n(\cdot, \mu)\|_{L_1(\Omega)} +
  \|u_n(\cdot, \mu) - u_{n,m}(\cdot, \mu)\|_{L_1(\Omega)}
\end{equation*}
With Proposition \ref{prop:outer-error} the first term can be estimated by
\begin{equation*}
  \|u(\cdot, \mu) - u_n(\cdot, \mu)\|_{L_1} \le \Lambda_n \Big\| \inf_{p \in \poly^n} |v(\cdot, \mu) - p| \Big\|_{L_1}.
\end{equation*}
In order to estimate the second term, using the definition of the Lebesgue constant and Lemma \ref{lemma:perturbation-measure}, we obtain
\begin{align*}
  \|u_n(\cdot, \mu) - u_{n,m}(\cdot, \mu)\|_{L_1(\Omega)} & \le \sum_{\eta \in \param_n} |\ell_\eta(\mu)| \Big\| u(\phi(\mu, \eta)(x),\eta) - u(\phi_m(\mu, \eta)(x),\eta) \Big\|_{L_1(\Omega)} \\
  & \le \Lambda_n \max_{\eta \in \param_n} \| u(\phi(\mu, \eta)(x),\eta) - u(\phi_m(\mu, \eta)(x),\eta) \|_{L_1(\Omega)} \\
  & \le c C \Lambda_n \max_{\eta \in \param_n} \|u(\cdot, \eta)\|_{BV(\Omega)} \max_{\eta \in \param_n} \| \phi(\mu, \eta) - \phi_m(\mu, \eta) \|_{L_\infty(\Omega)}
\end{align*}
Combining all three estimates completes the proof.
\end{proof}

If the $\mu$ dependence of $\phi(\mu, \eta)$ is smooth, we can use a polyadic decomposition for its approximation. Although there are much more sophisticated methods, possibly the simplest choice is a linear interpolation
\begin{equation}
  \phi_m(\mu, \eta)(x) = \sum_{\nu \in \hat{\param}_m} \hat{\ell}_{\nu}(\mu) \phi(\nu,\eta)(x)
  \label{eq:transform-interpolation}
\end{equation}
where $\hat{\ell}_\nu$ are Lagrange basis polynomials with respect to nodes in some finite set $\hat{\param}_m$. With this inner approximation, the error bound of Proposition \ref{prop:inner-outer-error} depends of the smoothness of the transformed snapshot $v_\mu(x, \eta)$ with respect to $\eta$ and of the transforms $(x,\mu) \to \phi(\mu, \eta)(x)$ with respect to $\mu$. If both dependencies are analytic, for suitable interpolation points the error decays exponentially.

\section{Inner transforms by characteristics}
\label{sec:transform-by-characteristics}

We still have to choose an inner transform $\phi(\mu, \eta)$ such that the transformed snapshots $v_\mu(x, \eta)$ are as smooth in $\eta$ as possible. One obvious idea that comes to mind is to somehow make use of characteristics. In this section, we discuss some problems that arise from that approach for the Riemann problem for Burgers' equation. In our example, the parameter is the hight of the jump in the initial condition which yields the parametric PDE
\begin{equation*}
  \begin{aligned}
    u_t + \left(\frac{1}{2} u^2 \right)_x & = 0 & \text{in } \real \times \real^+ \\
    u(x,0) = g_\mu(x) & = \left\{ \begin{array}{ll}
      \mu & x \le 0 \\
      0 & x > 0
    \end{array} \right. & \text{for } t = 0.
  \end{aligned}
\end{equation*}
In addition, we assume that $\mu > 0$ so that the solution has a shock along the curve $\frac{1}{2} \mu t$. To write down an explicit solution formula, let $\chi(x,t)$ be the origin (at time $t=0$) of the characteristic passing through the point $(x,t)$. It is easily seen to be
\begin{equation}
  \chi_\mu(x,t) = \left\{ \begin{array}{ll}
    x - \mu t & x \le \frac{1}{2} \mu t \\
    x         & x >   \frac{1}{2} \mu t.
  \end{array} \right.
  \label{eq:backward-characteristic} 
\end{equation}
Because $u(x,t,\mu)$ is constant along characteristics, we obtain 
\begin{equation}
  u(x, t, \mu) = g_\mu(\chi_\mu(x,t)).
  \label{eq:burgers-solution}
\end{equation}
The previous discussion aside, a simple idea for an approximation scheme is to encode or approximate $g_\mu(x)$ and the characteristics $\chi_\mu(x,t)$ and then use the exact solution formula \eqref{eq:burgers-solution} to reconstruct $u(x,t,\mu)$. In spirit this is similar to our original idea \eqref{eq:piecewise-constant} where the snapshots $u(\cdot, \eta)$ are replaced by $g_\eta(\cdot)$ and the transform $\phi(\mu, \eta)$ by the characteristic $\chi_\mu$. However, from the explicit formula \eqref{eq:backward-characteristic} for $\chi_\mu$, we see that this function has a parameter dependent jump. Thus, in general, we have to face the same difficulties for approximating the parameter dependent characteristic $(x,t,\mu) \to \chi_\mu(x,t)$ as for the original solution $(x,t,\mu) \to u(x,t,\mu)$ so that there is no progress with respect to this issue.

If we want to use characteristics to define the inner transform $\phi(\mu, \eta)$ of the transformed snapshot interpolation, the problems are even more complicated. As for \eqref{eq:burgers-solution} we can follow the characteristics backward in time, but because we to not evaluate the initial condition $g_\mu$ but a snapshot $u(x,t,\eta)$, we then follow the characteristics forward in time with a different parameter. To this end, let $\varphi_\mu(y,t)$ be the position of the characteristic at time $t$, starting at the initial position $y$ at time $t=0$:
\begin{equation}
  \varphi_\mu(y,t) = \left\{ \begin{array}{ll}
    y + \mu t          & y \le - \frac{1}{2} \mu t \\
    \frac{1}{2} \mu t  & - \frac{1}{2} \mu t \le y < \frac{1}{2} \mu t \\
    y                  & \frac{1}{2} \mu t < y.
  \end{array} \right.
  \label{eq:forward-characteristic} 
\end{equation}
Then we can transform one solution for parameter $\eta$ into a solution for parameter $\mu$ by
\begin{equation}
  u(x, t, \mu) = g_\mu(\chi_\mu(x,t)) = \frac{\mu}{\eta} g_\eta(\chi_\mu(x,t)) = \frac{\mu}{\eta} u(\varphi_\eta(\chi_\mu(x,t),t), t, \eta).
  \label{eq:solution-forward-backward}
\end{equation}
However, this formula is only correct for $\mu \ge \eta$. The reason is that the interval of points at $t=0$ that eventually end up in the shock at time $t$ is strictly larger for larger parameters. Thus, for $\mu < \eta$ there is a interval $I$ around the shock location of $\mu$ for which $\chi_\mu(I,t)$ is mapped into the shock location $\frac{1}{2} \eta$ by the forward characteristic $\varphi_\eta$. Thus, the right hand side of \eqref{eq:solution-forward-backward} has only one single value in the interval $I$ or is undefined whereas the left hand side has to different values and the formula is thus not correct.

Nonetheless, for $\mu \ge \eta$, we can define the transform 
\begin{equation}
  \phi(\mu, \eta)(x,t) = \varphi_\eta(\chi_\mu(x,t),t)
  \label{eq:transform-characteristics}
\end{equation}
so that by \eqref{eq:solution-forward-backward} the transformed snapshot is
\begin{equation*}
  v_\mu(x, t, \eta) = u(\phi(\mu, \eta)(x,t), \eta) = u(\varphi_\eta(\chi_\mu(x,t),t), t, \eta) = \frac{\eta}{\mu} u(x,t,\mu)
\end{equation*}
which is clearly smooth and in fact even linear in $\eta$. Using $\mu \ge \eta$ it is easy to verify that $\phi(\mu, \eta)$ is
\begin{align*}
  \phi(\mu, \eta)(x,t) & = \left\{ \begin{array}{ll}
    x - (\mu-\eta)t & x \le \frac{1}{2} \mu t \\
    x & x > \frac{1}{2} \mu t
  \end{array} \right.
\end{align*}
Note that for our approximation scheme \eqref{eq:interpol-transform} we need to know the function $(x,\mu) \to \phi(\mu, \eta)(x)$ for finitely many $\eta \in \param$. Again, this function has a $\mu$ dependent jump so that its approximation poses the same difficulties already encountered for $u(x,t,\mu)$ itself.

However, we are not obliged to use the transform \eqref{eq:transform-characteristics} based on characteristics. By noting that the shock location is $\frac{1}{2} \mu t$, simply shifting the whole solution in $x$-direction by
\begin{equation}
  \phi(\mu,\eta)(x,t) = x + \frac{1}{2} (\mu - \eta)t
  \label{eq:shift}
\end{equation}
aligns the shocks, i.e. the transformed snapshot $u(\phi(\mu,\eta)(x,t),t,\eta)$ has its shock in the location $\frac{1}{2} \mu t$ which is the shock location for parameter $\mu$. In addition the interpolation condition \eqref{eq:id} is obviously satisfied. Note that by \eqref{eq:backward-characteristic} and \eqref{eq:burgers-solution} the parametric solution is
\begin{equation*}
  u(x,t,\mu) = \left\{ \begin{array}{ll}
    \mu & x \le \frac{1}{2} \mu \\
    0   & x >   \frac{1}{2} \mu.
  \end{array} \right.
\end{equation*}
so that the transformed snapshot becomes
\begin{equation*}
  v_\mu(x,t,\eta) = \left\{ \begin{array}{ll}
    \eta & x \le \frac{1}{2} \mu \\
    0   & x >   \frac{1}{2} \mu.
  \end{array} \right. = \frac{\eta}{\mu} u(x,t,\mu)
\end{equation*}
which is the same as for the characteristics based transform, but now for all $\mu, \eta \in \param$ . Recall that the error estimate of Proposition \ref{prop:outer-error} just requires smoothness with respect to $\eta$ which is obviously the case. However, opposed to the characteristic construction also $\phi(\mu,\eta)(x)$ is smooth in $\mu$ so that polyadic decomposition based methods yield accurate approximations of $\phi$ at low cost.

\section{Optimizing the interpolation error}
\label{sec:optimization}

\subsection{Generalized gradients}
\label{sec:generalized-gradients}

We still need to find a realistic way to actually compute the inner transform $\phi(\mu, \eta)$. Similar to the construction of reduced bases, PODs or neural networks, we aim at finding an inner transform $\phi$ that minimizes the approximation error. To this end, we measure the error in the $sup$-norm with respect to the parameter which is typical for reduced basis methods but not mandatory. It follows that the overall error is given by
\begin{equation}
  \err_\param(\phi) := \sup_{\mu \in \param} \err_\mu(\phi),
  \label{eq:sup-error}
\end{equation}
where 
\begin{equation*}
  \err_\mu(\phi) := \|u(\cdot, \mu) - u_n(\cdot, \mu; \phi)\|_{L_1(\Omega)}
\end{equation*}
is the error for one fixed parameter. To make the dependence on the inner transform more explicit, in this section we denote the transformed snapshot interpolation \eqref{eq:interpol-transform} by $u_n(x, \mu; \phi) = u_n(x, \mu)$. In practice it is not possible to minimize the error $\err_\param(\phi)$ directly because it would require the knowledge of all functions $u(\cdot, \mu)$ for all parameters $\mu \in \param$. To this end, we only assume to know the errors $\err_\mu(\phi)$ of a finite training sample $\mu \in \train \subset \param$ so that the overall error \eqref{eq:sup-error} is replaced by the training error
\begin{equation}
  \err_\train(\phi) := \sup_{\mu \in \train} \err_\mu(\phi).
  \label{eq:training-error}
\end{equation}
Although surrogates for the training error are available for some singularly perturbed problems \cite{NguyenRozzaPatera2009, YanoPateraUrban2014, DahmenPleskenWelper2014, Dahmen2015} we omit these in favor of future research. Instead, we resort to an explicit knowledge of some training snapshots $u(\cdot, \mu)$, $\mu \in \train$ in addition to the snapshots that are used for the reconstruction \eqref{eq:interpol-transform} itself. In contrast to the reduced basis method this severely limits the size of the training sample. Nevertheless, in Section \ref{sec:numerical-experiments} we consider examples which yield good results with roughly twice as many training snapshots than reconstruction snapshots, so that the additional burden of the training samples is reasonable.

Because we are explicitly interested in non-smooth functions $u$, the error $\err_\train(\phi)$ is a non-trivial objective function to minimize. The next proposition shows that despite possible jumps of $u$ the error is Lipschitz continuous, nonetheless. The assumptions of this proposition are essentially the same as for Lemma \ref{lemma:perturbation-measure} and are commented right after it.

\begin{proposition}
  \label{prop:error-lipschitz}
  Assume that $u \in BV(\Omega)$ and that there are curves $\Phi^s(\mu, \eta)(x)$, $0 \le s \le 1$ for $x \in \Omega$, $\mu \in \param$, $\eta \in \param_n$ measurable and differentiable with respect to $s$ such that
  \begin{align}
    \Phi^0(\mu, \eta)(x) & = \phi(\mu, \eta)(x) & \Phi^1(\mu, \eta)(x) & = \varphi(\mu, \eta)(x).
    \label{eq:homotopy-transform}
  \end{align}
  and
  \begin{equation*}
    \begin{aligned}
      \Phi^s(\mu, \eta)_* \lambda(A) & \le c \lambda(A) & & \text{for all } A \in \mathcal{A} \text{ and } 0 \le s \le 1
    \end{aligned}
  \end{equation*}
  and
  \begin{equation*}
    \sup_{\substack{0 \le s \le 1 \\ x \in \Omega}} |\partial_s \Phi(\mu,\eta)^s(x)| \le C \|\phi(\mu, \eta) - \varphi(\mu, \eta)\|_{L_\infty(\Omega)}
  \end{equation*}
  for constants $c, C \ge 0$. Then we have
  \begin{equation}
    |\err_\train(\phi) - \err_\train(\varphi)| \le cC \Lambda_n \sup_{\eta \in \param_n} \|u(\cdot, \eta)\|_{BV(\Omega)} \sup_{\substack{\mu \in \param \\ \eta \in \param_n}} \left\| \phi(\mu, \eta) - \varphi(\mu, \eta)\right\|_{L_\infty(\Omega)}.
    \label{eq:error-lipschitz}
  \end{equation}
\end{proposition}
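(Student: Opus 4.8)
The plan is to reduce the claim to a pointwise-in-$\mu$ perturbation estimate and then reuse the machinery behind Proposition \ref{prop:inner-outer-error}. The starting observation is the elementary inequality for suprema: for any two bounded real families $(a_\mu)$ and $(b_\mu)$ one has $|\sup_\mu a_\mu - \sup_\mu b_\mu| \le \sup_\mu |a_\mu - b_\mu|$. Applying this with $a_\mu = \err_\mu(\phi)$ and $b_\mu = \err_\mu(\varphi)$ over $\mu \in \train$ gives
\[
  |\err_\train(\phi) - \err_\train(\varphi)| \le \sup_{\mu \in \train} |\err_\mu(\phi) - \err_\mu(\varphi)|,
\]
so it suffices to bound $|\err_\mu(\phi) - \err_\mu(\varphi)|$ uniformly in $\mu$ by the right-hand side of \eqref{eq:error-lipschitz}.

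First I would apply the reverse triangle inequality. Since $\err_\mu(\phi) = \|u(\cdot,\mu) - u_n(\cdot, \mu; \phi)\|_{L_1(\Omega)}$ and the common term $u(\cdot,\mu)$ cancels, one obtains $|\err_\mu(\phi) - \err_\mu(\varphi)| \le \|u_n(\cdot, \mu; \phi) - u_n(\cdot, \mu; \varphi)\|_{L_1(\Omega)}$. Next I would expand both interpolants via the definition \eqref{eq:interpol-transform}, subtract termwise, and use the triangle inequality to arrive at
\[
  \|u_n(\cdot, \mu; \phi) - u_n(\cdot, \mu; \varphi)\|_{L_1(\Omega)} \le \sum_{\eta \in \param_n} |\ell_\eta(\mu)| \, \|u(\phi(\mu,\eta)(\cdot),\eta) - u(\varphi(\mu,\eta)(\cdot),\eta)\|_{L_1(\Omega)}.
\]

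The crucial step is then to apply Lemma \ref{lemma:perturbation-measure} to each summand, with $u(\cdot, \eta)$ in the role of the outer function and $\phi(\mu,\eta), \varphi(\mu,\eta)$ in the roles of the two transforms. The homotopy hypothesis \eqref{eq:homotopy-transform} together with the pushforward and speed bounds of the proposition are precisely what the lemma requires, so each summand is bounded by $cC \|u(\cdot,\eta)\|_{BV(\Omega)} \|\phi(\mu,\eta) - \varphi(\mu,\eta)\|_{L_\infty(\Omega)}$. Substituting back, replacing $\|u(\cdot,\eta)\|_{BV(\Omega)}$ and $\|\phi(\mu,\eta) - \varphi(\mu,\eta)\|_{L_\infty(\Omega)}$ by their suprema over $\eta \in \param_n$ and over $(\mu,\eta)$ respectively, and recognizing $\sum_{\eta \in \param_n} |\ell_\eta(\mu)| \le \Lambda_n$ from the definition \eqref{eq:lebesgue-constant} of the Lebesgue constant, produces a bound independent of $\mu$ that matches the right-hand side of \eqref{eq:error-lipschitz}. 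Taking $\sup_{\mu \in \train}$ then closes the argument.

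I expect the only genuinely non-mechanical step to be the supremum reduction at the very start; everything afterward is essentially a rerun of the proof of Proposition \ref{prop:inner-outer-error}, with $\varphi$ playing the role that $\phi_m$ plays there. The supremum inequality is elementary but deserves to be stated explicitly, since it is what converts a statement about the worst-case training error into the termwise perturbation estimates that Lemma \ref{lemma:perturbation-measure} supplies.
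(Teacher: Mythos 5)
Your proposal is correct and follows essentially the same route as the paper's own proof: the supremum inequality $|\sup_\mu a_\mu - \sup_\mu b_\mu| \le \sup_\mu |a_\mu - b_\mu|$, the reverse triangle inequality for the $L_1$ norms, termwise expansion of the interpolants, and an application of Lemma \ref{lemma:perturbation-measure} to each summand followed by the Lebesgue-constant bound. No gaps.
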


\begin{proof}
  Note that the triangle inequality implies that
  \begin{equation}
    |\err_\train(\phi) - \err_\train(\varphi)| = \left|\sup_{\mu \in \train} \err_\mu(\phi) - \sup_{\mu \in \train} \err_\mu(\varphi)\right| \le \sup_{\mu \in \train} |\err_\mu(\phi) - \err_\mu(\varphi)|
    \label{eq:error-lipschitz-1}
  \end{equation}
  so that it is sufficient to bound $|\err_\mu(\phi) - \err_\mu(\varphi)|$. To this end note that
  \begin{align*}
    |\err_\mu(\phi) - \err_\mu(\varphi)| & = \Big| \|u(\cdot, \mu) - u_n(\cdot, \mu; \phi)\|_{L_1(\Omega)} - \|u(\cdot, \mu) - u_n(\cdot, \mu; \varphi)\|_{L_1(\Omega)} \Big| \\
    & \le  \Big\| \big[ u(\cdot, \mu) - u_n(\cdot, \mu; \phi) \big] - \big[ u(\cdot, \mu) - u_n(\cdot, \mu; \varphi) \big] \Big\|_{L_1(\Omega)} \\
    & \le  \left\| \sum_{\eta \in \param_n} \ell_\eta(\mu) \big[u(\phi(\mu, \eta)(x),\eta) - u(\varphi(\mu, \eta)(x),\eta) \big] \right\|_{L_1(\Omega)} \\
    & \le  \sum_{\eta \in \param_n} |\ell_\eta(\mu)| \left\| u(\phi(\mu, \eta)(x),\eta) - u(\varphi(\mu, \eta)(x),\eta) \right\|_{L_1(\Omega)}
  \end{align*}
  Due to the given assumptions, we can now apply Lemma \ref{lemma:perturbation-measure} to conclude that
  \begin{align*}
    |\err_\mu(\phi) - \err_\mu(\varphi)| & \le cC \sum_{\eta \in \param_n} |\ell_\eta(\mu)| \|u(\cdot, \eta)\|_{BV(\Omega)} \left\| \phi(\mu, \eta) - \varphi(\mu, \eta)\right\|_{L_\infty(\Omega)} \\
    & \le cC \Lambda_n \sup_{\eta \in \param_n} \|u(\cdot, \eta)\|_{BV(\Omega)} \sup_{\eta \in \param_n} \left\| \phi(\mu, \eta) - \varphi(\mu, \eta)\right\|_{L_\infty(\Omega)}.
  \end{align*}
  With \eqref{eq:error-lipschitz-1} this yields claimed estimate \eqref{eq:error-lipschitz}.
  
\end{proof}

In order to optimize the training error $\err_\train(\phi)$, we search for a minimizer in a set of candidate transforms $\phi \in \ts \subset X$ in a Banach space $X$. The space of continuous functions $C(\param \times \param \times \Omega)$ with the additional restrictions from Proposition \ref{prop:error-lipschitz} seems to be a reasonable choice for $\ts$ because in the last proposition the transform error is measured in the supremum norm. In general this objective function is not differentiable so that we cannot rely on standard gradient based optimizers. However, because $\err_\train(\phi)$ is Lipschitz continuous according to the last proposition, we can use optimization methods from non-smooth optimization \cite{Kiwiel1985, BurkeLewisOverton2005} relying on the generalized Clarke gradient \cite{Clarke2013}. To this end, for a direction $v \in X$, we first define the generalized directional derivative
\begin{equation*}
  \err^\circ(\phi; v) := \limsup_{\varphi \to \phi; \, h \downarrow 0} \frac{\err(\varphi + hv) - \err(\varphi)}{h},
\end{equation*}
where we suppress the additional $\train$ subscript of $\sigma$ for simplicity. Note that this limit is well defined because $\err$ is Lipschitz continuous. In order to define a gradient from these directional derivatives, recall that in the differentiable case one can define the gradient $\nabla \err \in X^*$ variationally by
\begin{equation*}
  \begin{aligned}
    \partial_v \err & = \dualp{\nabla \err, v}, & & \text{for all } v \in X
  \end{aligned}
\end{equation*}
where $X^*$ is the dual space of $X$ and $\dualp{\cdot, \cdot}$ the corresponding dual pairing. Likewise, in the Lipschitz continuous case we define the generalized gradient by
\begin{equation*}
  \begin{aligned}
    \partial_C \err = \{g \in X^* | \, \err^\circ(\phi; v) \ge \dualp{g, v}, \text{ for all } v \in X \}.
  \end{aligned}
\end{equation*}
In case $\err$ is differentiable this reduces to the standard gradient and in case $\err$ is convex to the subgradient. 

In the literature on non-smooth optimization one can find several algorithms to minimize $\err_\train(\phi)$ based on this generalized gradient. For some first numerical tests, we use a simple subgradient method:
\begin{align}
  \phi^{k+1} & = \phi^{k} + h_k \Delta^k & \Delta^k & \in \partial_C \err(\phi^k) & \err^0 = Id
  \label{eq:subgrad}
\end{align}
where $Id(x) = x$ is the identity transform. Note that this method does not use the full generalized gradient $\partial_C \err$ but just one element of it in each step. This is typical for non-smooth/convex optimization methods because usually the full generalized gradient is not known. Since non-smooth optimization problems often have kinks at the minimum itself, we cannot use standard techniques to control the step size and use a fixed rule
\begin{equation}
  \begin{aligned}
    h_k & = \alpha k^\beta, & \alpha & > 0, & 0 & < \beta < 1 
  \end{aligned}
  \label{eq:step-size}
\end{equation}
instead. This simple method converges for convex functions \cite{Bertsekas1999, Kiwiel1985} and yields good results in the numerical experiments below. More sophisticated methods including convergence analysis for non-convex problems are available, see e.g. \cite{Kiwiel1985, BurkeLewisOverton2005}.

\subsection{Global minima?}

Because the objective function $\err_\train(\phi)$ is non-convex in general, we must make sure that we do not end up in a suboptimal local minimum. In this section, we discuss some preliminary ideas to overcome this issue for a simple class of 1d problems. To this end, let us consider piecewise constant functions
\begin{equation}
  \begin{aligned}
    u(x,\mu) & = \left\{ \begin{array}{ll}
      u_0, & \text{for } x < x_1(\mu) \\
      u_i, & \text{for } x_i(\mu) \le x < x_{i+1}(\mu) \\
      u_n, & \text{for } x_n(\mu) \le x
    \end{array} \right.
  \end{aligned}
  \label{eq:piecewise-const}
\end{equation}
with smooth parameter dependent jump locations $x_i(\mu)$. We assume that the order $x_0(\mu) < \dots < x_n(\mu)$ never changes and that the jump locations are well separated i.e. there is a constant $L \ge 0$ with $|x_i(\mu) - x_{i-1}(\mu)| \ge L$, $i=1, \dots, n$.

\paragraph{Transformed snapshot interpolation and training error}

Let us first state the transformed snapshot interpolation for these functions and the optimization problem to find the inner transform. Because $u(x,\mu)$ is piecewise constant in $x$, for transforms $\phi(\mu, \eta)$ that perfectly align the discontinuities the transformed snapshots $v_\mu(x, \eta)$ are constant in $\eta$. Therefore, it is sufficient to confine ourselves to one single snapshot for the outer interpolation, say $u(x,\mu_0)$ with node $\param_n = \{\mu_0\}$ so that we obtain
\begin{equation}
  u(x,\mu) = v_\mu(x,\mu_0) = u(\phi(\mu, \mu_0), \mu_0).
  \label{eq:tsi-constant-u}
\end{equation}
It follows that we have to compute inner transforms $\phi(\mu, \mu_0)$ for all $\mu \in \param$ and the single fixed node $\mu_0$. To this end, we assume to know additional training snapshots $u(\cdot, \mu_1), \dots, u(\cdot, \mu_m)$ for interpolation points $\mu_1 < \dots < \mu_m$, with $\mu_0 \le \mu_1$ for simplicity. Because we just use on snapshot for the outer interpolation the training error \eqref{eq:training-error} reduces to 
\begin{equation}
  \err_\train(\phi) = \sup_{1 \le i \le m} \err_{\mu_i}(\phi) = \sup_{1 \le i \le m} \|u(\cdot, \mu_i) - u(\phi(\mu_i, \mu_0), \mu_0)\|_{L_1(\Omega)}.
\end{equation}
Since all transforms $\phi(\mu_i, \mu_0)$, $1 \le 1 \le m$ are uncorrelated, we can further simplify this and optimize for each transform $\phi(\mu_i, \mu_0)$ individually, which yields
\begin{equation}
  \phi(\mu_i, \mu_0) = \argmin_{\varphi} \|u(\cdot, \mu_i) - u(\varphi(\cdot), \mu_0)\|_{L_1(\Omega)}
  \label{eq:opt-i}
\end{equation}
for $i=1, \dots, n$. Finally, with $\phi(\mu_0, \mu_0)(x) = x$ to ensure the interpolation condition \eqref{eq:id}, as in \eqref{eq:transform-interpolation} we can define the full transform by an interpolation
\begin{equation*}
  \phi_m(\mu, \mu_0) = \sum_{i=0}^m \hat{\ell}(\mu) \phi(\mu_i, \mu_0)
\end{equation*}
where $\hat{\ell}_i$ are the Lagrange polynomials for the nodes $\mu_0, \dots, \mu_m$.

\paragraph{Counterexample: Local minima}

It remains to solve the $m$ optimization problems \eqref{eq:opt-i}. Already for this simple problem, optimization methods relying on local search for updates can easily be fooled into a non-optimal local minimum. To this end, consider the example in Figure \ref{fig:example-local} defined by
\begin{align}
  u(x,\mu) & = \chi_{I_1(\mu)} + \chi_{I_2(\mu)}, & I_1(\mu) & = [\mu, \mu+1), & I_2(\mu) & = [\mu+4, \mu+5),
  \label{eq:two-boxes}
\end{align}
where $\chi$ is the characteristic function and the parameter shifts the entire function. 

\begin{figure}[htb]

  \begin{center}
    \begin{tikzpicture}[scale=0.6]
    
      \newcommand{\xmin}{-1}
      \newcommand{\xmax}{10}
      \newcommand{\twoboxes}[3]{
      \draw[#3, thick] (\xmin,0+#2) -- (#1,0+#2) -- (#1,1+#2) -- (1+#1,1+#2) -- (1+#1,0+#2) -- (4+#1,0+#2) -- (4+#1,1+#2) -- (5+#1,1+#2) -- (5+#1,0+#2) -- (\xmax,0+#2) ;
      }
    
      \twoboxes{0}{0}{black}
      \twoboxes{0.4}{-1.5}{black}
      \twoboxes{3.8}{-3}{black}
    
      \node () at (1+\xmax,0.5) {$\mu_0$};
      \node () at (1+\xmax,-1) {$\mu_1$};
      \node () at (1+\xmax,-2.5) {$\mu_2$};
      
    \end{tikzpicture}
  \end{center}

  \caption{Functions \eqref{eq:two-boxes} for various parameters}
  \label{fig:example-local}

\end{figure}

The snapshots $\mu_0$ and $\mu_2$ in Figure \eqref{fig:example-local} are arranged such that the first interval of $u(x,\mu_2)$ intersects the second one of $u(x,\mu_0)$. Therefore the error $\err_{\mu_2}(\phi)$ is simply the area of the mismatch between the overlapping intervals plus the area of the two mismatched intervals. Note in particular that any small perturbation of $\phi(\mu_2, \mu_0)$ does not change the later error contribution. It follows that optimization schemes exclusively relying on local information are fooled into a local minimum that matches the (wrong) intersecting intervals.

However, the situation changes, when the difference between the snapshot parameter $\mu_0$ and the training parameter $\mu$ is small as e.g. for $\mu_1$ in Figure \ref{fig:example-local}. In this case there are no mismatched intervals and intuitively already simple subgradient based optimization schemes converge to the correct global minimum perfectly aligning the two functions. That this is in fact true is discussed in the following.

\paragraph{Local convexity}

We confine ourselves to spatially monotone transforms $\phi(\mu, \mu_0)$ in agreement with our assumption that the order of the jumps $x_i(\mu)$ does not change. Ideally, we search for transforms which exactly match the jump locations
\begin{equation*}
  \phi(\mu, \mu_0)(x_i(\mu)) = x_i(\mu_0) \Leftrightarrow \phi(\mu,\mu_0)^{-1}(x_i(\mu_0)) = x_i(\mu).
\end{equation*}
Practically, we have to deal with perturbations so that the jumps only approximately match
\begin{equation*}
  \phi(\mu,\mu_0)^{-1}(x_i(\mu_0)) \approx x_i(\mu).
\end{equation*}
If this matching error is sufficiently small compared to the minimal jump distance $L$, such that only adjacent intervals of $u(\cdot, \mu)$ and $v_\mu(x,\mu_0)$ overlap the training error simplifies to
\begin{equation}
  \err_\mu(\phi) = \sum_{i=1}^n |u_i - u_{i-1}| |x_i(\mu) - \phi(\mu,\mu_0)^{-1}(x_i(\mu_0))|.
  \label{eq:err-piecewise-const}
\end{equation}
This error is convex in $\phi(\mu, \mu_0)^{-1}$ which is not surprising because we assume that we are already close to a minimum. Whereas in principle the convexity allows us to compute optimal transforms this is not yet very practical since it requires very good initial values. However the identity transform $id(x) = x$ satisfied 
\begin{equation}
  |x_i(\mu) - id^{-1}(x_i(\mu_0))| = |x_i(\mu) - x_i(\mu_0)|
  \label{eq:err-id}
\end{equation}
which is sufficiently small to guarantee the error representation \eqref{eq:err-piecewise-const} for $\mu$ sufficiently close to $\mu_0$ so that $id$ can be used as an initial value in that case. 

Formally, in order to obtain a convex optimization problem, we augment the original training error minimization \eqref{eq:opt-i} with the following constraints
\begin{equation}
  \begin{aligned}
    & \err_\mu(\phi) \to \min \\
    & \phi(\mu, \mu_0)^{-1} \in C(\Omega) \text{ strictly monotonically increasing} \\
    & |x_i(\mu_0) - \phi(\mu,\mu_0)^{-1}(x_i(\mu_0))| \le B, \quad i=0, \dots, n
  \end{aligned}
\end{equation}
for some constant $B>0$. Both constraints are clearly convex in $\phi(\mu, \mu_0)^{-1}$. To show convexity of the objective function, note that by the triangle inequality we have
\begin{equation*}
  |x_i(\mu) - \phi(\mu,\mu_0)^{-1}(x_i(\mu_0))| \le |x_i(\mu) - x_i(\mu_0)| + |x_i(\mu_0) - \phi(\mu,\mu_0)^{-1}(x_i(\mu_0))|.
\end{equation*}
Thus, for $\mu$ sufficiently close to $\mu_0$ and $B$ sufficiently small the objective function reduces to \eqref{eq:err-piecewise-const} which is convex with respect to $\phi(\mu, \mu_0)^{-1}$. According to \eqref{eq:err-id} the identity is allowed by the constraints so that we know a suitable initial value for iterative optimization methods. Moreover for a transform $\hat{\phi}(\mu, \mu_0)$ perfectly aligning the discontinuities, we have
\begin{equation*}
  |x_i(\mu_0) - \hat{\phi}(\mu,\mu_0)^{-1}(x_i(\mu_0))| = |x_i(\mu_0) - x_i(\mu)| 
\end{equation*}
which is also allowed by the constraints. It follows that the optimal error is $\err_\mu(\hat{\phi}) = 0$ which is therefore a global minimum.

\paragraph{Locality by transitivity}

In summary for $\mu$ sufficiently close to $\mu_0$, we can reliably find a global minimum of the error $\err_\mu(\phi)$ by solving a convex optimization problem, eventually with the identity $id(x) = x$ as initial value. But what about larger differences of $\mu$ and $\mu_0$ as e.g. in our counter example with $\mu_2$ in Figure \ref{fig:example-local}? To this end, recall that the main purpose of the transform $\phi(\mu, \eta)$ is the alignment of jumps and kinks. Thus, if $x(\mu)$ is the location of a jump for parameter $\mu$, we want the transforms to satisfy
\begin{equation*}
  \phi(\mu, \eta)(x(\mu)) = x(\eta).
\end{equation*}
This condition guarantees that the transformed snapshot $v_\mu(x(\mu), \eta) = u(x(\eta), \mu)$ has a jump at $x(\mu)$ just as the target function $u(x,\mu)$. But this alignment condition is transitive in nature: For three consecutive parameters $\mu_0$, $\mu_1$ and $\mu_2$ we have
\begin{equation*}
  \big( \phi(\mu_1, \mu_0) \circ  \phi(\mu_2, \mu_1)\big) (x(\mu_2)) = x(\mu_0)
\end{equation*}
so that $\phi(\mu_1, \mu_0) \circ  \phi(\mu_2, \mu_1)$ correctly aligns the jumps for parameters $\mu_0$ and $\mu_2$. Because this alignment property is a major requirement for the transform $\phi(\mu_2, \mu_0)$, we can define it that way
\begin{equation}
  \phi(\mu_2, \mu_1) := \phi(\mu_1, \mu_0) \circ  \phi(\mu_2, \mu_1).
  \label{eq:compose}
\end{equation}
Let us apply this construction to our example transformed snapshot interpolation \eqref{eq:tsi-constant-u} where we have one snapshot at $\mu_0$ and $m$ training snapshots at $\mu_1 \le \dots \le \mu_m$. If we enforce transitivity \eqref{eq:compose}, we define
\begin{equation*}
  \phi(\mu_i, \mu_0) := \phi(\mu_{1}, \mu_0) \circ \dots \circ \phi(\mu_i, \mu_{i-1})
\end{equation*}
so that we are left with the calculation of the ``local in $\mu$'' transforms $\phi(\mu_i, \mu_{i-1})$. Because they are supposed to align jumps and kinks of $u(\cdot, \mu_i)$ and $u(\cdot, \mu_{i-1})$ we can compute them by solving the optimization problem 
\begin{equation}
  \phi(\mu_i, \mu_{i-1}) = \argmin_{\varphi} \|u(\cdot, \mu_i) - u(\varphi(\cdot), \mu_{i-1})\|_{L_1(\Omega)}
  \label{eq:convex-opt}
\end{equation}
which is the same as the original problem \eqref{eq:opt-i} with $\mu_0$ replaced by $\mu_{i-1}$. By choosing sufficiently many training snapshots, we can enforce $|\mu_i - \mu_{i-1}|$ to be sufficiently small such that the optimization problem \eqref{eq:convex-opt} becomes convex. Therefore, we can reliably find global minimizers $\phi(\mu_i, \mu_{i-1})$ and in turn by \eqref{eq:compose} a transform that perfectly aligns the discontinuities for the parameters $\mu_0$ and $\mu_i$. This leads to a zero training error $\err_\train(\phi)$ which is therefore a global minimum.

In summary, for the reconstruction of piecewise constant functions in 1d from one snapshot, we can find $\phi(\mu, \eta)$ as the global minimum of the training error provided there are sufficiently many training snapshots. Of course the argument relies on a couple of assumptions that are not true in more general cases. Notably, the functions $u(x,\mu)$ might not be piecewise constant, we eventually want to use more snapshots for the outer interpolation and the parameter and spacial dimensions can be larger that one. Nonetheless, a transitivity property of the transforms is still realistic. As for the simple example of this section, this allows some locality in the parameter for the minimization of the training error. How to make use of this and to what extend this is helpful for more complicated scenarios is an open problem.

\section{Numerical experiments}
\label{sec:numerical-experiments}

In this section, we consider some first numerical tests of the transformed snapshot interpolation \eqref{eq:interpol-transform}. First, in Section \ref{sec:gaussian}, a 1d example is presented, where the focus is on the approximation rate, while the inner transforms $\phi(\mu, \eta)$ are given explicitly. Then, in Section \ref{sec:burgers} the method is tested with a 2d Burgers Riemann problem where the solution is explicitly known. Finally, in Section \ref{sec:shock-bubble} the method is applied to a shock bubble interaction which is a more challenging test case for the optimizer of the inner transform.

\subsection{Cut off Gaussian}
\label{sec:gaussian}

For a first numerical experiment, we consider the parametric function
\begin{align}
  u(x, \mu) & := N \left( \frac{x}{0.4+\mu}-1 \right), &
  N(x) & := \left\{ \begin{array}{ll}
    0.4 \, e^{-7.0 \, x^2} & -1 \le x < -\frac{1}{2} \\
    0 & \text{else}
  \end{array} \right.
  \label{eq:normal-cut-off}
\end{align}
which is a scaled and shifted Gaussian, cut off at a parameter dependent location, see Figure \ref{fig:example-gaussian}. This function is not chosen with a parametric PDE in mind, but has a parameter dependent jump and because it is known explicitly it is well suited for analyzing the performance of the transformed snapshot interpolation. For the snapshots we consider to alternatives: First we use the exact function \eqref{eq:normal-cut-off} and second we interpolate it by piecewise linear functions on a uniform grid. The latter choice should simulate the outcome of PDE solvers which yield similar approximations of the parametric solution. Due to the simplicity of the example, we choose shifts for the inner transform:
\begin{equation*}
  \phi(\mu, \eta)(x) = x + s(\mu, \eta).
\end{equation*}
Recall that for the transformed snapshot interpolation \eqref{eq:interpol-inner-outer}, we only need to know $s(\mu, \eta)$ for interpolation points $\mu \in \hat{\param}_m$ and $\eta \in \param_n$. Thus, we can encode $\phi$ by storing $m \times n$ floating point numbers. For all examples, we choose $\param_n = \hat{\param}_n$. In addition, in this example we only consider the approximation properties of the transformed snapshot interpolation. In order not to interfere with the optimizer for actually finding the transform, we use an explicit formula for $s(\mu, \eta)$ that exactly aligns the jumps and consider the optimizer in the numerical examples below.

The numerical results are summarized in Figure \ref{fig:example-gaussian}. In case we use exact snapshots, we see a more than polynomial convergence rate. Note that after aligning the snapshots, the transformed snapshots $v_\mu(x,\eta)$ are analytic in $\eta$ so that this behaviour is in line with the error bounds of Proposition \ref{prop:outer-error}. However, for the linearly interpolated snapshots the situation is different. The error first decays and then saturates at a level dependent on the spacial grid resolution. These levels correspond to the maximal error of the snapshots themselves as shown in Table \ref{table:example-gaussian}. This makes sense because the transformed snapshot interpolation error can hardly be better than the error of the snapshots it relies on. 

For a comparison, Figure \ref{fig:example-gaussian} also contains the error of a simple polynomial interpolation without transform. We see the typical staircasing behaviour and an error that is orders of magnitudes worse than the one with previous transform.

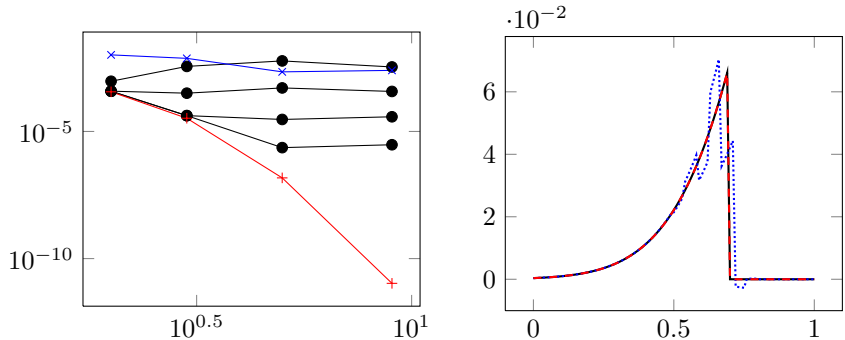
\begin{figure}[htb]

  \hfill
  \begin{tikzpicture}
    \begin{loglogaxis}[width=0.5\textwidth]
      \addplot[mark=*] table[x=n parameters, y=Example 0, col sep=comma] {pics/example_shift_normal_grid_errors.csv};
      \addplot[mark=*] table[x=n parameters, y=Example 1, col sep=comma] {pics/example_shift_normal_grid_errors.csv};
      \addplot[mark=*] table[x=n parameters, y=Example 2, col sep=comma] {pics/example_shift_normal_grid_errors.csv};
      \addplot[mark=*] table[x=n parameters, y=Example 3, col sep=comma] {pics/example_shift_normal_grid_errors.csv};
      \addplot[mark=+, color=red] table[x=n parameters, y=Example 4, col sep=comma] {pics/example_shift_normal_grid_errors.csv};
      \addplot[mark=x, color=blue] table[x=n parameters, y=id, col sep=comma] {pics/example_shift_normal_grid_errors.csv};
    \end{loglogaxis}
  \end{tikzpicture}
  \hfill
  \begin{tikzpicture}
    \begin{axis}[width=0.5\textwidth]
      \addplot[mark=none, color=black, thick] table[x=grid points, y=truth solution, col sep=comma] {pics/example_shift_normal_grid_plots.csv};
      \addplot[mark=none, color=blue, densely dotted, thick] table[x=grid points, y=interpolation, col sep=comma] {pics/example_shift_normal_grid_plots.csv};
      \addplot[mark=none, color=red, dashed, thick] table[x=grid points, y=transformed solution, col sep=comma] {pics/example_shift_normal_grid_plots.csv};
    \end{axis}
  \end{tikzpicture}
  \hfill~

  \caption{Left: Errors of transformed snapshot interpolations of \eqref{eq:normal-cut-off}. Black with dot marks: Snapshots are piecewise linear interpolations with uniform grid sizes $h=0.1, 0.01, 0.001, 0.0001$. Red with $+$ marks: Snapshots are exact functions without any approximation. Blue with $\times$ marks: Interpolation without transform. Right: Truth solution (black) and interpolation from nine snapshots with (red, dashed) and without (blue dotted) transform.}

  \label{fig:example-gaussian}

\end{figure}

\begin{table}[htb]
\pgfplotstabletypeset[
    every head row/.style={
        before row={
            \hline
            \multicolumn{1}{|c|}{} & 
            \multicolumn{5}{|c|}{transformed snapshot interpolation} & 
            \multicolumn{1}{c|}{interpolation} \\
            \hline
        },
        after row={\hline}
    },
    col sep=comma,
    columns/{n parameters}/.style={column name=$n$, column type/.add={|}{}},
    columns/{Example 0}/.style={column name=$0.1$, column type/.add={|}{}},
    columns/{Example 1}/.style={column name=$0.01$, column type/.add={|}{}},
    columns/{Example 2}/.style={column name=$0.001$, column type/.add={|}{}},
    columns/{Example 3}/.style={column name=$0.0001$, column type/.add={|}{}},
    columns/{Example 4}/.style={column name=exact, column type/.add={|}{}},
    columns/{id}/.style={column name=exact, column type/.add={|}{|}},
    every last row/.style={after row={ 
      \hline
      \hline
      \multicolumn{1}{|c|}{} & 
      \multicolumn{5}{|c|}{maximal $L_1(\Omega)$ error of the snapshots} &
      \multicolumn{1}{|c|}{} \\
      \hline
      & $3.58 \cdot 10^{-3}$ & $3.53 \cdot 10^{-4}$ & $3.48 \cdot 10^{-5}$ & $3.66 \cdot 10^{-10}$ & & \\
      \hline
    }}
]{pics/example_shift_normal_grid_errors.csv}
\caption{Errors for example \eqref{eq:normal-cut-off} for different uniform spacial grids and number of snapshots ($n$). The last line contains the maximal $L_1(\Omega)$ error of the respective snapshots.}
\label{table:example-gaussian}
\end{table}

\subsection{2d Burgers' equation}
\label{sec:burgers}

For a second example, we consider the two dimensional Burgers' equation
\begin{equation*}
  \partial_t u + \nabla \cdot \left(\frac{1}{2} u^2 \boldsymbol{v} \right) = 0
\end{equation*}
with $\boldsymbol{v} = (1,1)^T$ and the initial condition
\begin{equation*}
  u(x, y, 0) = \left\{ \begin{array}{rl}
    -0.2 & \text{if } x<0.5 \text { and } y>0.5 \\
    -1.0 & \text{if } x>0.5 \text { and } y>0.5 \\
     0.5 & \text{if } x<0.5 \text { and } y<0.5 \\
     0.8 & \text{if } x>0.5 \text { and } y<0.5
  \end{array} \right.
\end{equation*}
on the unit cube $\Omega = [0,1]^2$. According to \cite{GuermondPasquettiPopov2011, GerhardMuller2014} the exact solution for this problem is
\begin{equation}
  u(x, y, t) = \left\{ \begin{array}{rlcl}
    \begin{array}{rr}
      -0.2 \\ 0.5
    \end{array} & \text{if } 
      x < \frac{1}{2} - \frac{3t}{5} & 
    \text{and} & \left\{\begin{array}{l}
      y > \frac{1}{2} + \frac{3t}{20}, \\ \text{otherwise},
    \end{array} \right.
    \\
    \begin{array}{rr}
      -1.0 \\ 0.5
    \end{array} & \text{if } 
      \frac{1}{2} - \frac{3t}{5} < x < \frac{1}{2} - \frac{t}{4} & 
    \text{and} & \left\{\begin{array}{l}
      y > -\frac{8x}{7} + \frac{15}{14} - \frac{15t}{28}, \\ \text{otherwise},
    \end{array} \right.
    \\
    \begin{array}{rr}
      -1.0 \\ 0.5
    \end{array} & \text{if } 
      \frac{1}{2} - \frac{t}{4} < x < \frac{1}{2} + \frac{t}{2} & 
    \text{and} & \left\{\begin{array}{l}
      y > \frac{x}{6} + \frac{5}{12} - \frac{5t}{24}, \\ \text{otherwise},
    \end{array} \right.
    \\
    \begin{array}{rr}
      -1.0 \\ \frac{2x - 1}{2t}
    \end{array} & \text{if } 
      \frac{1}{2} + \frac{t}{2} < x < \frac{1}{2} + \frac{4t}{5} & 
    \text{and} & \left\{\begin{array}{l}
      y > x - \frac{5}{18t} \left( x + t - \frac{1}{2} \right)^2, \\ \text{otherwise},
    \end{array} \right.
    \\
    \begin{array}{rr}
      -1.0 \\ 0.8
    \end{array} & \text{if } 
      \frac{1}{2} + \frac{4t}{5} < x & 
    \text{and} & \left\{\begin{array}{l}
      y > \frac{1}{2} - \frac{t}{10}, \\ \text{otherwise},
    \end{array} \right.
    \\
  \end{array} \right.
  \label{eq:burgers-exact}
\end{equation}
For a simple test of the transformed snapshot interpolation, we consider the time $t$ as the parameter of interest so that the snapshots are solutions at various time instances used for the reconstruction of the solution at intermediate times. Note that with this choice of the parameter the solution has exactly the features in question: it has parameter dependent jumps and kinks along non-trivial curves. In addition the exact solution is known which is helpful for an exact assessment of the numerical errors. In order to simulate a numerical solution of the Burger's equation, we sample the snapshots on a $100 \times 100$ grid and use a piecewise linear reconstruction from these samples. Also the integrals for evaluating the errors during the optimization of the inner transform $\phi$ rely on this grid.

Figure \ref{fig:burgers} shows the results for a reconstruction at time $0.45$ from two snapshots at times $0.3$ and $0.5$ with an additional training snapshot at time $0.4$ to define the training error $\err_\train(\phi)$. For a first test, the inner transforms $\phi(\mu, \eta)$ for $\mu, \eta \in \{0.3, 0.5\}$ are simply polynomials mapping $\real^2 \to \real^2$. In general this choice does not guarantee that $\Omega$ is mapped to itself, however it is easy to enforce that the edges of the rectangular domain are mapped to itself so that small perturbations of the identity are diffeomorphisms. In order to be able to align both the kink and the jump in the lower right corner, for the $x$-component we choose a (multivariate) polynomial of degree $3 \times 2$ and for the $y$-component a polynomial of degree $2 \times 2$. For the optimization of the training error with respect $\phi$ we use a subgradient method \eqref{eq:subgrad}, \eqref{eq:step-size} with $500$ steps of the rather conservative fixed step size
\begin{equation*}
  h_k =  \frac{10^{-3}}{(i+1)^{0.1}}.
\end{equation*}
Compared to a classical polynomial interpolation, the additional transform almost completely removes the artificial staircasing behaviour. Also the kinks around the ``ramp'' in the upper left corner of the figures are much better resolved. The $L_1$ errors computed by an adaptive quadrature instead of the grid of the snapshots are as follows.
\begin{center}
  \begin{tabular}{ll}
    $L_1$-error interpolation                           &  0.0355373675439  \\
    $L_1$-error transformed snapshot interpolation      &  0.00739513000396 \\
    maximal snapshot $L_1$-error                        &  0.0051148730424
  \end{tabular}
\end{center}
In conclusion the additional inner transform reduces the error almost by a factor of five compared to a plain polynomial interpolation. Note that the error of the transformed snapshot interpolation is almost down to the maximal error of the snapshots themselves. As we have verified in Figure \ref{fig:example-gaussian} for the 1d example of Section \ref{sec:gaussian} we expect the error to saturate somewhere around this level so that more snapshots or degrees of freedom for the inner transform are not expected to yield major improvements. This is a serious bottleneck for computing convergence rates: Due to the jump discontinuities the maximal error of the snapshots converges with a low rate. The resulting high number of spacial degrees of freedom renders the computation of convergences rates challenging.

\begin{figure}[htb]

  \hfill
  \includegraphics[width=0.25\textwidth]{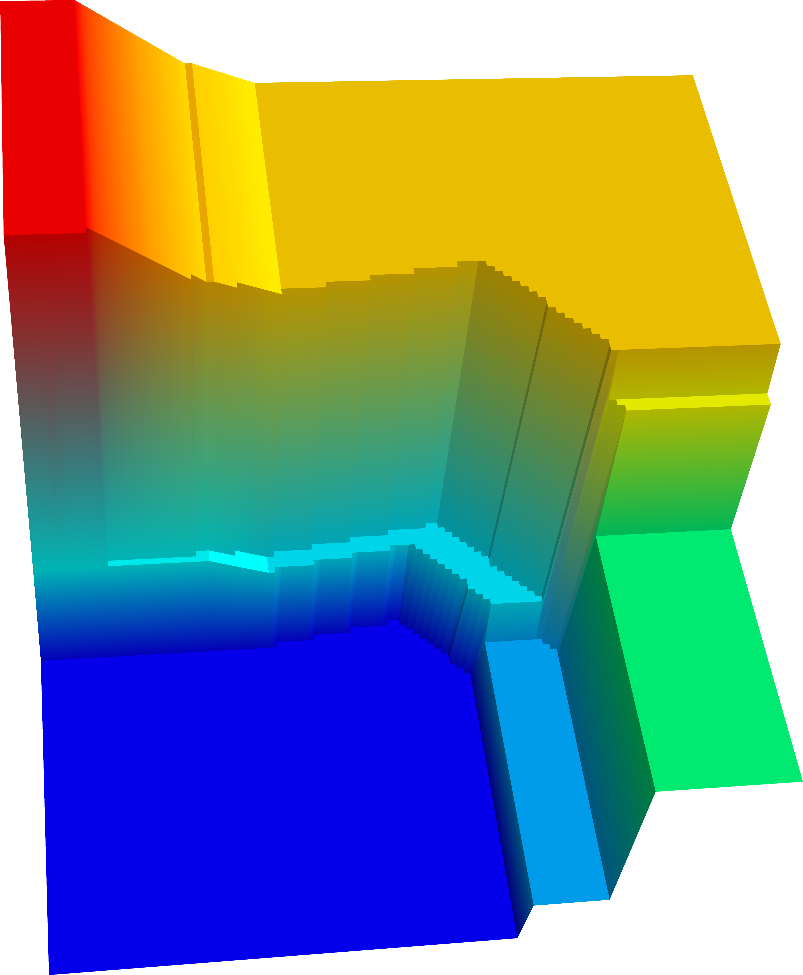}
  \hfill
  \includegraphics[width=0.25\textwidth]{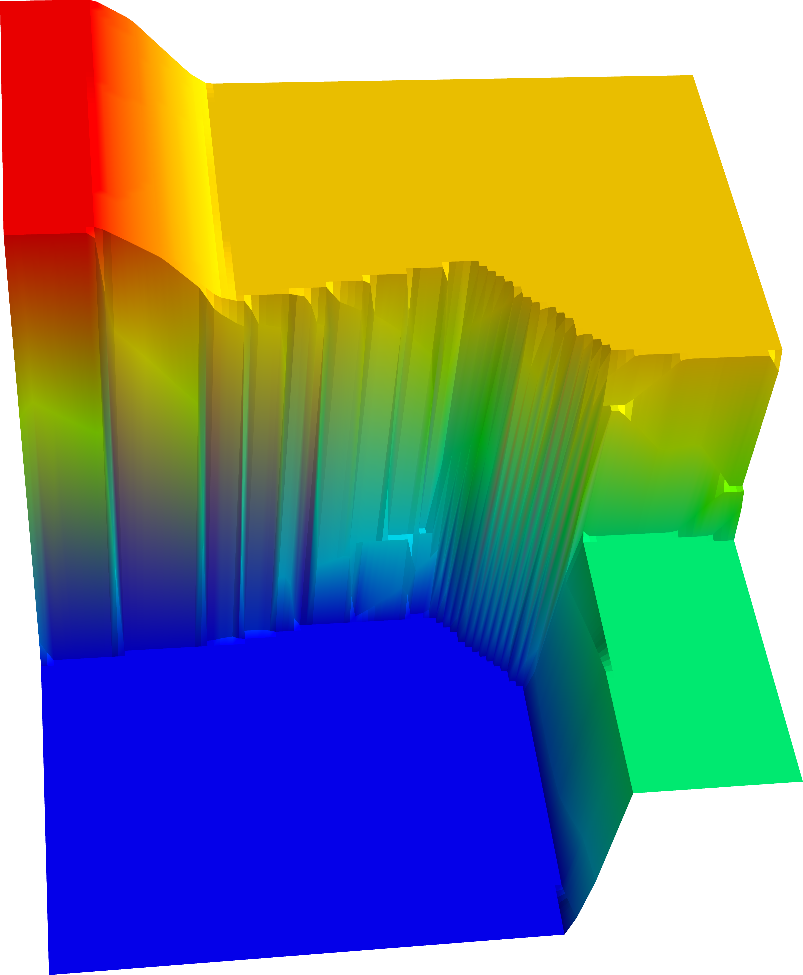}
  \hfill
  \includegraphics[width=0.25\textwidth]{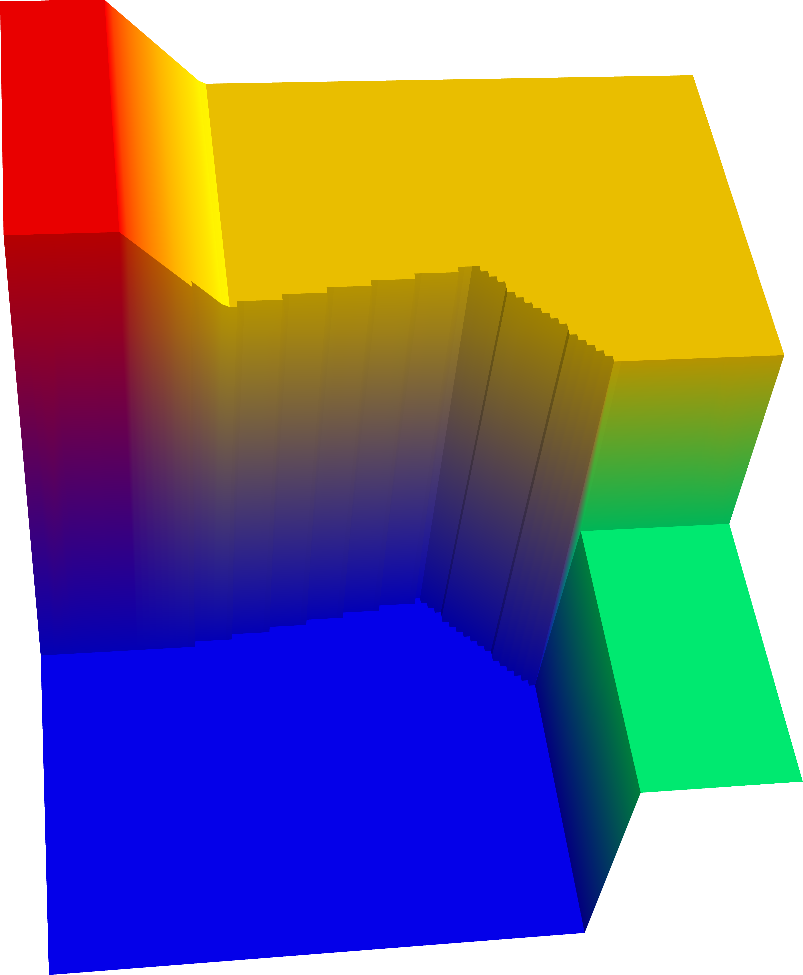}
  \hfill~

  \caption{Reconstruction of the solution \eqref{eq:burgers-exact} for Burgers equation at the time $t=0.45$. Left: polynomial interpolation, middle: transformed snapshot interpolation, right: exact solution.}
  \label{fig:burgers}
\end{figure}

\subsection{Shock bubble interaction}
\label{sec:shock-bubble}

For a last more complicated example, we consider a compressible Euler simulation of a shock-bubble interaction \cite{Nazarov2015}. Because the code for the above examples relies on piecewise linear interpolation on a uniform grid to represent the snapshots it is straight forward to read them from pictures. For the shock bubble interaction experiments they are frames from a video showing the time evolution of the density provided by \cite{Nazarov2015a, Nazarov2015}. Figure \ref{fig:bubble} shows the snapshots and the reconstruction at a new time by linear interpolation and transformed snapshot interpolation. Comparable to Section \ref{sec:burgers}, we simply choose third order polynomials for the inner transform $\phi$ mapping the edges of the domain to itself. We see that the linear interpolation result basically shows the two bubbles from the original snapshots, whereas the true solution of course just has one bubble. Using the additional transform $\phi$, the second picture finds the correct location of the shock and the bubble. Thus despite lots of more complicated fine structure in the pictures, the optimizer reliably finds the correct transform. Having a closer look, the reconstructed bubble appears to be a little blurred. However, we only use third order polynomials which eventually is insufficient for a perfect alignment of the shapes.

\begin{figure}[htb]

  \includegraphics[width=0.45\textwidth]{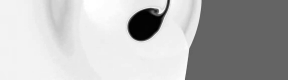}
  \hfill
  \includegraphics[width=0.45\textwidth]{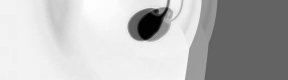}
  \hfill~\\[0.2cm]
  \includegraphics[width=0.45\textwidth]{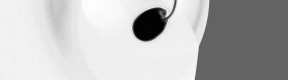}
  \hfill
  \includegraphics[width=0.45\textwidth]{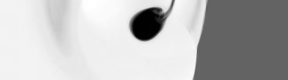}
  \hfill~\\[0.2cm]
  \includegraphics[width=0.45\textwidth]{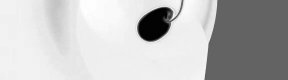}

  \caption{Left column: Snapshots for time indices $3.5$, $3.75$ and $4.0$ where the middle one is only used for the optimization of the transform $\phi$. Right column: reconstruction at time index $3.7$ by linear interpolation (top) and transformed snapshot interpolation (bottom).}
  \label{fig:bubble}
\end{figure}

\begin{appendices}

\section{Linear Width}
\label{sec:linear-width}

As an example for the limitations of polyadic decomposition based methods, let us consider their best possible performance for the following simple parametric transport problem:
\begin{equation*}
  \begin{aligned}
    A_\mu u_\mu & := u_t + \mu u_x = 0 & & \text{for } 0<t<1, \, x \in \real \\
    u(x,0)  & = g(x) := \left\{ \begin{array}{ll} 
    0 & x<0 \\
    1 & x \ge 0
  \end{array} \right. & & \text{for } t=0,
  \end{aligned}
\end{equation*}
with parameter $\mu \in \param = [\mu_{\min}, \mu_{\max}] \subset \real$. Its solution is given by
\begin{equation}
  u(x,t) = g(x-\mu t).
  \label{eq:exact-solution}
\end{equation}
The typical benchmark for the performance of reduced basis methods is the Kolmogorov $n$-width 
\begin{equation*}
  d_n(\solm) = \inf_{\dim Y = n} \sup_{u \in \solm} \inf_{\phi \in Y} \|u - \phi\|_{L_1}
\end{equation*}
of the solution manifold
\begin{equation}
  \solm := \{u(\cdot, \mu) | \mu \in \param \} = \{g(x - \mu t) | \mu \in \param \}.
  \label{eq:solution-transport}
\end{equation}
However, with $X_n := \linspan \{\psi_i : \, i=1, \dots, n\}$ based on the $\psi_i$ of the polyadic decomposition \eqref{eq:polyadic-decomposition}, we conclude that
\begin{equation*}
  d_n(\solm) \le \sup_{u \in \solm} \inf_{\phi \in X_n} \|u(\cdot, \mu) - \phi\|_{L_1} \le \sup_{u \in \solm} \left\| u(\cdot, \mu) - \sum_{i=1}^n c_i(\mu) \psi_i \right\|_{L_1}.
\end{equation*}
Therefore, the errors of polyadic decomposition based methods, including but not restricted to reduced basis methods, are lower bounded by the Kolmogorov $n$-width if the error is measured in the $\sup$-norm with respect to the parameter variable. Of course this measure for the error is not appropriate for all applications, but we use it here to exemplify the limitations of the standard polyadic decomposition based methods.

For our simple model problem the Kolmogorov $n$-width is bounded according to the following Proposition. The proof is similar to \cite{Donoho2001}, see also \cite{LorentzGolitschekMakovoz1996}. Note that order one is already achieved by a simple nonadaptive piecewise constant approximation as e.g. in \eqref{eq:piecewise-constant-no-transform}.

\begin{proposition}
  The Kolmogorov width of the solution manifold $\solm$ defined in \eqref{eq:solution-transport} satisfies
  \begin{equation*}
    d_n(\solm) \sim n^{-1},
  \end{equation*}
  i.e. $d_n(\solm)$ is equivalent to $n^{-1}$ up to a constant.
\end{proposition}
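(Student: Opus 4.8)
The plan is to prove the two bounds $d_n(\solm) \lesssim n^{-1}$ and $d_n(\solm) \gtrsim n^{-1}$ separately. The upper bound is elementary and is already anticipated in the remark preceding the statement. First I would discretize the parameter interval $\param = [\mu_{\min}, \mu_{\max}]$ into $n$ subintervals with nodes $\mu_1 < \dots < \mu_n$ of spacing $\mathcal{O}(1/n)$ and set $Y = \linspan\{u(\cdot, \mu_i) : i = 1, \dots, n\}$. For a given $\mu$ let $\mu_i$ be the nearest node. A direct computation shows that for fixed $t$ the difference $g(x - \mu t) - g(x - \mu_i t)$ is, up to sign, the indicator of the interval between $\mu t$ and $\mu_i t$, so integrating first in $x$ and then in $t \in (0,1)$ gives $\|u(\cdot, \mu) - u(\cdot, \mu_i)\|_{L_1} = \tfrac12 |\mu - \mu_i| = \mathcal{O}(1/n)$. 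Taking $\phi = u(\cdot, \mu_i) \in Y$ yields $d_n(\solm) \le \sup_\mu \|u(\cdot, \mu) - u(\cdot, \mu_i)\|_{L_1} \lesssim n^{-1}$.

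The lower bound is the substantial part. The strategy is to exhibit an $(n+1)$-dimensional ball inside the balanced convex hull $\mathrm{absconv}(\solm)$ of $\solm$ and to invoke the classical fact that such a ball cannot be approximated from an $n$-dimensional subspace beyond its radius. Concretely, I would pick $N = n+1$ equally spaced parameters $\mu_0 < \dots < \mu_N$ with gap $h = (\mu_{\max} - \mu_{\min})/N \sim n^{-1}$, write $f_j = u(\cdot, \mu_j)$, and form the bumps $\beta_j = f_j - f_{j-1}$ for $j = 1, \dots, N$. Each $\beta_j$ equals $-1$ on the wedge $\{(x,t) : \mu_{j-1} t \le x < \mu_j t, \ 0 < t < 1\}$ and vanishes elsewhere; these wedges emanate from the origin and are pairwise disjoint. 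Consequently $\|\beta_j\|_{L_1} = h/2$ and, because the supports are disjoint, $\|\sum_j c_j \beta_j\|_{L_1} = \tfrac{h}{2}\sum_j |c_j|$, so $V := \linspan\{\beta_1, \dots, \beta_N\}$ is, with the ambient $L_1$ norm, an isometric copy (up to the factor $h/2$) of $\ell_1^N$.

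Next I would verify that the $L_1$-ball of $V$ of radius $h/4$ lies in $\mathrm{absconv}(\solm)$. For $\|a\|_{\ell_1} \le 1$, Abel summation gives $\tfrac12\sum_j a_j \beta_j = \tfrac12\big(-a_1 f_0 + \sum_{j=1}^{N-1}(a_j - a_{j+1}) f_j + a_N f_N\big)$, whose coefficients on the snapshots have $\ell_1$-norm bounded by the total variation of $(0,a_1,\dots,a_N,0)$, hence by $2\|a\|_{\ell_1} \le 2$; thus $\tfrac12\sum_j a_j\beta_j \in \mathrm{absconv}(\solm)$. As $a$ ranges over the $\ell_1$-ball this set is exactly $\{w \in V : \|w\|_{L_1} \le h/4\}$, and since the coefficients telescope to zero these elements genuinely lie in $L_1$. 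Because the inner quantity $\inf_{\phi \in Y}\|\cdot - \phi\|_{L_1}$ is convex and even in its argument, one has $d_n(\solm) = d_n(\mathrm{absconv}\,\solm)$, so it remains to bound $d_n$ of an $(n+1)$-dimensional $L_1$-ball of radius $h/4$ from below by $h/4 \sim n^{-1}$.

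This last step is the main obstacle, and it is precisely where a genuine topological argument is needed: passing to a larger ambient space only gives the approximator more freedom, so the bound must hold against all $n$-dimensional subspaces of $L_1$, not merely those inside $V$. I would invoke the classical width-of-a-ball theorem, that the Kolmogorov $n$-width of an $(n+1)$-dimensional ball of radius $\rho$ equals $\rho$, whose lower half rests on Borsuk's antipodal theorem; this is exactly the tool underlying the references \cite{Donoho2001, LorentzGolitschekMakovoz1996}. Combining it with the previous step yields $d_n(\solm) \ge h/4 = (\mu_{\max}-\mu_{\min})/(4(n+1)) \gtrsim n^{-1}$, which together with the upper bound proves $d_n(\solm) \sim n^{-1}$. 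A minor technical point to dispatch along the way is that the snapshots themselves are not in $L_1$ (the step function has a nonvanishing tail); this is harmless, since only differences of snapshots enter the estimates, so the manifold lies in an affine subspace on which all relevant quantities are finite.
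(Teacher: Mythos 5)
Your proposal is correct and follows essentially the same route as the paper's proof: both bound the width from above by a nearest-snapshot (piecewise constant) approximation, and from below by forming the differences $u_{\mu_j}-u_{\mu_{j-1}}$ supported on disjoint wedges of area $h/2$, recognizing their span as an isometric copy of $\ell_1^N$, and invoking the classical width-of-a-ball theorem. The only differences are bookkeeping (you pass through $\mathrm{absconv}(\solm)$ with an Abel-summation argument where the paper compares $d_n(\{\xi_i\})\le 2d_n(\solm)$ directly), plus your explicit remark that the snapshots themselves are not in $L_1$, a point the paper leaves implicit.
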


\begin{proof}

We show the lower bound by comparing the Kolmogorov $n$-width of the solution manifold $\solm$ to the known width of a ball in the $L_1$-norm. For the construction of this ball, we choose a uniform distribution of $k+1$ snapshots with parameters
\begin{equation*}
  \begin{aligned}
    \mu_i & = \mu_{\min} + \frac{i}{k}(\mu_{\max} - \mu_{\min}), & i & = 0, \dots, k,
  \end{aligned}
\end{equation*}
where $k$ will be chosen below. Note that these specific snapshots are only used for this proof and are not necessarily used in actual approximation methods like e.g. reduced basis methods. We use the snapshots to define the functions
\begin{equation*}
  \begin{aligned}
    \xi_i & = u_{\mu_i} - u_{\mu_{i-1}}, & i & = 1, \dots, k,
  \end{aligned}
\end{equation*}
which will be the corners of the $L_1$-ball. From 
\begin{equation*}
  \inf_{y \in Y} \|\xi_i - y\|_{L_1} \le \inf_{y \in Y} \|u_{\mu_i} - y\|_{L_1} + \inf_{y \in Y} \|u_{\mu_{i-i}} - y\|_{L_1} \le 2 \sup_{u \in \solm} \inf_{y \in Y} \|u - y\|_{L_1}
\end{equation*}
for any linear space $Y$ of dimension at most $n$ follows that
\begin{equation*}
  d_n(\{ \xi_0, \dots, \xi_k \}) \le 2 d_n(\solm).
\end{equation*}
We complete the set $\{ \xi_0, \dots, \xi_k \}$ to a full ball without increasing the Kolmogorov width. To this end assume that $\lambda_i$, $i=1, \dots, k$ satisfy $\sum_{i=1}^k |\lambda_1| \le 1$ and $y_i$, $i=1, \dots, k$ are the minimizers of $\inf_{y \in Y} \|\xi_i - y\|_{L_1}$. Then we have
\begin{multline*}
  \inf_{y \in Y} \left \| \sum_{i=1}^k \lambda_i \xi_i - y \right\|_{L_1} \le \left \| \sum_{i=1}^k \lambda_i \xi_i - \sum_{i=1}^k \lambda_i y_i \right\|_{L_1} \\ \le \sum_{i=1}^k |\lambda_i| \|\xi_i - y_i\|_{L_1} \le d_n(\{ \xi_0, \dots, \xi_k \}) \le 2 d_n(\solm)
\end{multline*}
for any space $Y$ of dimension smaller than $n$ realizing the Kolmogorov width of $\solm$. It follows that for the set
\begin{equation*}
  \solm' = \left\{ \sum_{i=1}^k \lambda_i \xi_i : \sum_{i=1}^k |\lambda_1| \le 1 \right\}
\end{equation*}
we have
\begin{equation}
  d_n(\solm') \le 2 d_n(\solm).
  \label{eq:compare-width}
\end{equation}
Next, we show that $\solm'$ is in fact an $L_1$-ball. To this end, note that according to the exact solution \eqref{eq:exact-solution} the functions $\xi_i$ take the value one on disjoint triangles and zero else. The area of the triangles is $h/2$ with $h = (\mu_{\max} - \mu_{\min})/k$, because the time $t$ is bounded between $0 < t < 1$. Thus, we have
\begin{equation*}
  \|\xi_i\|_{L_1} = \frac{h}{2}. 
\end{equation*}
It follows that 
\begin{equation*}
  \left\| \sum_{i=1}^k \lambda_i \xi_i \right\|_{L_1} 
  = \sum_{i=1}^k |\lambda_i| \|\xi_i\|_{L_1}
  = \frac{h}{2} \sum_{i=1}^k |\lambda_i|.
\end{equation*}
so that
\begin{equation*}
  \solm' = \linspan \{ \xi_1, \dots, \xi_k \} \cap B^{h/2}_{L_1}
\end{equation*}
where $B^{h/2}_{L_1}$ is the $L_1$-ball with radius $h/2$. Choosing $k = 2n$, we obtain the Kolmogorov width
\begin{equation*}
  d_n(\solm') = \frac{h}{2},
\end{equation*}
see e.g. \cite{LorentzGolitschekMakovoz1996}. Using \eqref{eq:compare-width} and $h \sim 1/n$ completes the proof of the lower bound.

In order to prove the upper bound, note that $u_\mu - u_{\mu_i}$ is one on a triangular domain and zero else where $u_{\mu_i}$ are the snapshots used in the proof of the lower bound. Calculating the area of the triangle as before, this yields
\begin{equation*}
  \|u_\mu - u_{\mu_i}\|_{L_1} \le \frac{h}{2}
\end{equation*}
for the parameter $\mu_i$ closest to $\mu$. Thus, a piecewise constant approximation by $u_{\mu_0}, \dots, u_{\mu_k}$ yields the upper bound of the proposition.
\end{proof}

\end{appendices}

\bibliographystyle{plain}

\bibliography{lit,local}

\end{document}